\theoremstyle{plain}
\newtheorem{theorem}{Theorem}[section]
\newtheorem{proposition}[theorem]{Proposition}
\newtheorem{corollary}[theorem]{Corollary}
\theoremstyle{definition}
\newtheorem{remark}[theorem]{Remark}
\newcommand{\IR}{\mathbbm{R}}
\newcommand{\IX}{\mathbbm{X}}
\def\%#1{\mathcal{#1}}
\newcommand{\law}{\mathscr{L}}
\newcommand{\nsig}{\Sigma\kern-0.5em\raise0.2ex\hbox to 0pt{$\mid$}\kern0.5em}
\newcommand{\toinf}{\to\infty}
\newcommand{\ahalf}{{\textstyle\frac{1}{2}}}
\newcommand{\eq}{\eqref}
\newcommand{\IE}{\mathbbm{E}}
\newcommand{\IP}{\mathbbm{P}}
\newcommand{\Var}{\mathop{\mathrm{Var}}}
\newcommand{\Cov}{\mathop{\mathrm{Cov}}}
\newcommand{\Id}{\mathop{\mathrm{Id}}}
\newcommand{\MVN}{\mathop{\mathrm{MVN}}}
\def\be#1\ee{\begin{equation*}#1\end{equation*}}
\def\ben#1\ee{\begin{equation}#1\end{equation}}
\def\bes#1\ee{\begin{equation*}\begin{split}#1\end{split}\end{equation*}}
\def\besn#1\ee{\begin{equation}\begin{split}#1\end{split}\end{equation}}
\def\bg#1\ee{\begin{gather*}#1\end{gather*}}
\def\bgn#1\ee{\begin{gather}#1\end{gather}}
\def\bm#1\ee{\begin{multline*}#1\end{multline*}}
\def\bmn#1\ee{\begin{multline}#1\end{multline}}
\def\ba#1\ee{\begin{align*}#1\end{align*}}
\def\ban#1\ee{\begin{align}#1\end{align}}
\def\bklr#1{\bigl(#1\bigr)}
\def\bbklr#1{\Bigl(#1\Bigr)}
\def\bbbklr#1{\biggl(#1\biggr)}
\def\bbbklrl{\biggl(}
\def\bbbklrr{\biggr)}
\def\bklg#1{\bigl\{#1\bigr\}}
\def\bbbklg#1{\biggl\{#1\biggr\}}
\def\bbbklgl{\biggl\{}
\def\bbbbklgl{\Biggl\{}
\def\bbbklgr{\biggr\}}
\def\bbbbklgr{\Biggr\}}
\def\norm#1{\Vert#1\Vert}
\def\bnorm#1{\bigl\Vert#1\bigr\Vert}
\def\abs#1{\vert#1\vert}
\def\babs#1{\bigl\vert#1\bigr\vert}
\def\bbabs#1{\Bigl\vert#1\Bigr\vert}
\def\bbbabs#1{\biggl\vert#1\biggr\vert}
\def\mid{\vert}
\def\bmid{\bigm\vert}
\def\^#1{\ifmmode {\mathaccent"705E #1} \else {\accent94 #1} \fi}
\def\~#1{\ifmmode {\mathaccent"707E #1} \else {\accent"7E #1} \fi}
\def\*#1{#1^\ast}
\def\>#1{\vec{#1}}
\def\.#1{\dot{#1}}
\def\leq{\leqslant}
\def\geq{\geqslant}
\def\atop{\@@atop}
\def\zeroPadTwo#1{\ifnum #1<10 0\fi#1}
\def\now{%
\minute=\time
\hour=\time \divide \hour by 60
\hourMins=\hour \multiply\hourMins by 60
\advance\minute by -\hourMins
\zeroPadTwo{\the\hour}:\zeroPadTwo{\the\minute}%
}
\def\proofsection#1{%
\par\bigskip%
\noindent{\it #1}%
\nopagebreak\medskip
\@afterheading\@afterindentfalse\par}
\def\cal{\mathcal}
\newcommand{\bX}{{{W}}}
\newcommand{\tI}{{\tilde{I}}}
\newcommand{\RR}{\IR} %\newcommand{\RR}{\bf R}
\def\bea#1\ena{\begin{eqnarray}#1\end{eqnarray}}
\def\beas#1\enas{\begin{eqnarray*}#1\end{eqnarray*}}
\def\beq#1\enq{\begin{equation}#1\end{equation}}
\newcommand{\ignore}[1]{}
\numberwithin{equation}{section}
\begin{document}

\title[Embedding for multivariate normal approximation]{U-statistics and random
subgraph counts: Multivariate normal approximation
via  exchangeable pairs and embedding}
% \shorttitle{Embedding for multivariate normal approximation}
% \authornames{G.~Reinert and A.~R\"ollin}
\author{Gesine~Reinert \and Adrian~R\"ollin}
\maketitle
\vspace{-0.4cm}

\begin{center}
\it University of Oxford and National University of Singapore
\end{center}

% \authorone[University of Oxford]{Gesine Reinert}
% \addressone{%
% Department of Statistics, %
% University of Oxford, %
% 1 South Parks Road, %
% Oxford OX1 3TG, UK}
% \authortwo[National University of Singapore]{Adrian R\"ollin}
% \addresstwo{%
% Department of Statistics and Applied Probability, %
% National University of Singapore, %
% 3 Science Drive 2, %
% Singapore 117543}

% \footnote{GR was supported in part by BBSRC and EPSRC through OCISB and AR was
% supported
% in part by Swiss National Science Foundation project PBZH2--117033
% }

\begin{abstract} 
In a recent paper \cite{Reinert2009} a new approach---called the
``embedding method''---was introduced, which allows to make use of exchangeable
pairs for normal
and multivariate normal approximation with Stein's method in cases where the
corresponding couplings do not satisfy a certain linearity condition. The key
idea is to embed the problem into a higher dimensional space in such a way that
the linearity condition is then satisfied. Here we apply the embedding to
U-statistics as well as to subgraph counts in random graphs. 
\end{abstract}

%%%%%%%%%%%%%%%%%%%%%%%%%%%%%%%%%%%%%%%%%%%%%%%%%%%%%%%%%%%%%%%%%%%%%%%%%%%%%%%
%%%%%%%%%%%%%%%%%%%%%%%%%%%%%%%%%%%%%%%%%%%%%%%%%%%%%%%%%%%%%%%%%%%%%%%%%%%%%%%
%%%%%%%%%%%%%%%%%%%%%%%%%%%%%%%%%%%%%%%%%%%%%%%%%%%%%%%%%%%%%%%%%%%%%%%%%%%%%%%
%%%%%%%%%%%%%%%%%%%%%%%%%%%%%%%%%%%%%%%%%%%%%%%%%%%%%%%%%%%%%%%%%%%%%%%%%%%%%%%

\section{Introduction} 

Stein's method, first introduced in the 70s \cite{Stein1972}, has proven a
powerful tool for assessing distributional distances, such as to the normal
distribution, in the presence of dependence. When considering sums $W$ of random
variables, the dependence between these random variables needs to be weak in
order for the distance to a normal distribution to be small. For quantifying
weak dependence, Stein \cite{Stein1986} introduced the
method of exchangeable pairs: construct a sum $W'$ such that $(W,W')$ form an
exchangeable pair, and such that $\IE^W(W'-W)$ is (at least approximately)
linear in $W$. This linearity condition arises naturally when thinking of
correlated bivariate normals. The generalisation of this approach to a
multivariate setting remained untackled until recently Chatterjee and
Meckes~\cite{Chatterjee2007} solved the problem in the case of exchangeable
vectors $(W,W')$ such that $\IE^W(W'-W) = - \lambda W + R$ for a scalar
$\lambda$ and a remainder vector $R$ such that $\IE | R | $ is small. This is a
rather special case;  the authors \cite{Reinert2009} tackled the general setting
that 
\ben                                                                \label{1}
    \IE^W(W'-W) = - \Lambda W + R
\ee
for a matrix $\Lambda$ and a vector $R$ with small $\IE|R|$  is treated.
In in a followup paper by Meckes~\cite{Meckes2009} the results by
Chatterjee and Meckes \cite{Chatterjee2007} and by the authors
\cite{Reinert2009}
are combined using slightly different smoothness conditions on test functions as
compared to \cite{Reinert2009}; non-smooth test functions are not treated by
Meckes \cite{Meckes2009}, but the bounds obtained there improve
on those from \cite{Reinert2009} for the example of $d$-runs with respect
to smooth test functions. 

A surprising finding in \cite{Reinert2009} was that it is often
possible to embed
a random vector $W$ into a  random vector $\hat{W}$ of larger, but still finite,
dimension, such that \eq{1} holds with $R=0$; yet this embedding does not correspond to Hoeffding projections. Here we explore the embedding
method further, by illustrating its use on two important examples. The first
example is complete non-degenerate U-statistics, and the second example
considers the joint count of edges and triangles in Bernoulli random graphs. In
both examples the limiting covariance matrix is not of full rank; yet the bounds
on the normal approximation are of the expected order.  

The paper is organised as follows. In Section \ref{sec1} we review the
theoretical results in \cite{Reinert2009}, giving bounds on the
distance to
normal under the linearity condition \eq{1}, both for smooth test functions and
for non-smooth test functions. In Section \ref{sec4} we discuss the embedding
method, and point out a link to Rademacher integrals and chaos decompositions. Section~\ref{secu}
illustrates the embedding method for complete non-degenerate  U-statistics; the
embedding vector contains lower-order U-statistics which are obtained via fixing
components. Section~\ref{randomgraphs} gives a normal approximation for the
joint counts of the number of edges and the number of triangles in a Bernoulli
random graph; to our knowledge these are the first explicit bounds for this
multivariate problem. The embedding method suggests to count the number of
2-stars as well, which makes the results not only more informative but also, surprisingly, 
 easier to derive.   

\section{Theoretical bounds for a multivariate normal approximation} 
\label{sec1}
\subsection{Notation}

Denote by $W = (W_1,W_2,\dots,W_d)^t$ random vectors in $\IR^d$, where $W_i$ are
$\IR$-values random variables for $i=1,\dots,d$. We denote by $\Sigma$
symmetric, non-negative definite matrices, and hence by $\Sigma^{1/2}$ the
unique symmetric square root of~$\Sigma$. Denote by $\Id$ the identity matrix,
where we omit the dimension~$d$. Throughout this article, $Z$ denotes a random
variable having standard $d$-dimensional multivariate normal distribution. We
abbreviate the transpose of the inverse of a matrix $\Lambda$ as $\Lambda^{-t} :=
(\Lambda^{-1})^t$.

For derivatives of smooth functions $h:\IR^d\to\IR$, we use the notation $\nabla
$ for the gradient operator. 
%The partial derivatives are abbreviated as $ h_i =
%\frac{\partial}{\partial x_i} h$, and $h_{i,j} = \frac{\partial^2}{\partial x_i
%\partial x_j}h$, unless we would like to emphasize the dependence on the
%variables. 
Denote by $\norm{\cdot}$ the supremum norm for both functions and  matrices. If
the corresponding derivatives exist for some function $h:\IR^d\to\IR$, we
abbreviate $\abs{h}_1 :=\sup_{i}\bnorm{\frac{\partial}{\partial x_i} h}$,
$\abs{h}_2 :=\sup_{i,j}\bnorm{\frac{\partial^2}{\partial x_i \partial x_j}h}$,
and so on.

%%%%%%%%%%%%%%%%%%%%%%%%%%%%%%%%%%%%%%%%%%%%%%%%%%%%%%%%%%%%%%%%%%%%%%%%%%%%%%%
%%%%%%%%%%%%%%%%%%%%%%%%%%%%%%%%%%%%%%%%%%%%%%%%%%%%%%%%%%%%%%%%%%%%%%%%%%%%%%%
%%%%%%%%%%%%%%%%%%%%%%%%%%%%%%%%%%%%%%%%%%%%%%%%%%%%%%%%%%%%%%%%%%%%%%%%%%%%%%%
%%%%%%%%%%%%%%%%%%%%%%%%%%%%%%%%%%%%%%%%%%%%%%%%%%%%%%%%%%%%%%%%%%%%%%%%%%%%%%%

%\subsection{Smooth test functions} \label{sec2}

We start by considering smooth test functions.

\begin{theorem}[c.f. Theoem~2.1 \cite{Reinert2009}] \label{thm1} Assume that
$(W,W')$ is an exchangeable pair of\/ $\IR^d$-valued random variables such that 
\ben
    \IE W = 0,\qquad\IE W W^t = \Sigma, \label{2}
\ee 
with $\Sigma \in\IR^{d\times d}$ symmetric and positive definite. Suppose
further that~\eq{1} is satisfied for an invertible matrix $\Lambda$ and a
$\sigma(W)$-measurable random variable~$R$. Then, if $Z$ has $d$-dimensional
standard normal distribution, we have for every three times differentiable
function~$h$,
\ben                                                           \label{3}
    \babs{\IE h(W)-\IE h(\Sigma^{1/2}Z)}
    \leq 
    \frac{\abs{h}_2}{4}  A
    +\frac{\abs{h}_3}{12} B    
    +\bbklr{\abs{h}_1 +\ahalf d\norm{\Sigma}^{1/2} \abs{h}_2} C   ,  
\ee
where, with $\lambda^{(i)} = \sum_{m=1}^d\abs{(\Lambda^{-1})_{m,i}}$, 
\ba
A &= \sum_{i,j=1}^d{\lambda^{(i)}} \sqrt{\Var{\IE^W(W'_i-W_i)(W'_j-W_j)}},\\
B &= \sum_{i,j,k=1}^d{\lambda^{(i)}} \IE\babs{(W'_i-W_i)(W'_j-W_j)(W'_k-W_k)},\\
C &= \sum_{i}\lambda^{(i)}\sqrt{\IE R_i^2}.
\ee
\end{theorem}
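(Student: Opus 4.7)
My plan is to apply Stein's method with the multivariate Ornstein--Uhlenbeck generator $\mathcal{L}f(x) := \sum_{j,k}\Sigma_{jk}\partial_{jk}f(x) - x^t\nabla f(x)$, whose stationary law is $\MVN(0,\Sigma)$. For three-times differentiable $h$, the Stein equation $\mathcal{L}f_h = h - \IE h(\Sigma^{1/2}Z)$ is solved by $f_h(x) = -\int_0^\infty(P_th(x)-\IE h(\Sigma^{1/2}Z))\,dt$, where $P_t$ is the Mehler semigroup $P_th(x) = \IE h(e^{-t}x+\sqrt{1-e^{-2t}}\Sigma^{1/2}Z)$; differentiating under the integral and using $|P_th|_k \leq e^{-kt}|h|_k$ yields the bounds $|f_h|_k \leq \tfrac{1}{k}|h|_k$ for $k=1,2,3$, which I would either cite from \cite{Reinert2009} or record as a preparatory lemma. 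Evaluating at $W$,
\be
\IE h(W) - \IE h(\Sigma^{1/2}Z) = \sum_{jk}\Sigma_{jk}\IE\partial_{jk}f_h(W) - \IE[W^t\nabla f_h(W)],
\ee
so the task is to rewrite the two pieces using the exchangeable pair and~\eq{1} so that they combine into a small quantity.

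The first ingredient is the standard antisymmetry identity: $\IE[(W'_i-W_i)(\partial_m f_h(W')+\partial_m f_h(W))] = 0$ by exchangeability; a second-order Taylor expansion of $\partial_m f_h(W')-\partial_m f_h(W)$ together with $\IE^W(W'_i-W_i) = -(\Lambda W)_i+R_i$, multiplied by $(\Lambda^{-1})_{m,i}$ and summed over $i$ and $m$, gives
\bes
\IE[W^t\nabla f_h(W)] &= \tfrac12\sum_{i,j,m}(\Lambda^{-1})_{m,i}\IE\bkle{\IE^W[(W'_i-W_i)(W'_j-W_j)]\,\partial_{jm}f_h(W)} \\
&\quad {}+ \sum_{i,m}(\Lambda^{-1})_{m,i}\IE[R_i\,\partial_m f_h(W)] + E_3,
\ee
where the third-order Taylor remainder satisfies $|E_3| \leq \tfrac14|f_h|_3 B$. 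The second ingredient is a covariance identity for $\Sigma$ itself: taking expectations in \eq{1} and using $\IE WW^t = \Sigma$ gives $\IE(W'-W)(W'-W)^t = 2\Lambda\Sigma - 2\IE[RW^t]$, which inverted reads
\be
\Sigma_{jk} = \tfrac12\sum_i(\Lambda^{-1})_{k,i}\IE[(W'_i-W_i)(W'_j-W_j)] + \sum_i(\Lambda^{-1})_{k,i}\IE[R_iW_j].
\ee

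Substituting this expression into $\sum_{jk}\Sigma_{jk}\IE\partial_{jk}f_h(W)$ and subtracting the expression for $\IE[W^t\nabla f_h(W)]$, the two leading triple sums differ only in whether $\IE^W$ sits inside a single expectation or a product of expectations is formed, so by the identity $\IE[X]\IE[Y]-\IE[XY] = -\Cov(X,Y)$ they collapse into
\be
-\tfrac12\sum_{ijk}(\Lambda^{-1})_{k,i}\Cov\bklr{\IE^W[(W'_i-W_i)(W'_j-W_j)],\,\partial_{jk}f_h(W)}.
\ee
Applying $|\Cov(X,Y)| \leq \norm{Y}\sqrt{\Var X}$ with $\norm{\partial_{jk}f_h} \leq |f_h|_2 \leq \tfrac12|h|_2$, and recognising $\sum_k|(\Lambda^{-1})_{k,i}| = \lambda^{(i)}$, gives the leading term $\tfrac14|h|_2 A$. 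The Taylor remainder contributes $\tfrac14|f_h|_3 B \leq \tfrac1{12}|h|_3 B$. Among the $R$-terms, the piece $\IE[R_i\partial_m f_h(W)]$ is bounded by Cauchy--Schwarz as $|f_h|_1\sqrt{\IE R_i^2} \leq |h|_1\sqrt{\IE R_i^2}$, while the piece $\IE[R_iW_j]\IE\partial_{jk}f_h(W)$ is bounded using $|\IE[R_iW_j]| \leq \sqrt{\IE R_i^2}\sqrt{\Sigma_{jj}} \leq \sqrt{\IE R_i^2}\,\norm{\Sigma}^{1/2}$ (summing $\sqrt{\Sigma_{jj}}$ over the $d$ coordinates produces the factor $d\norm{\Sigma}^{1/2}$) combined with $|f_h|_2 \leq \tfrac12|h|_2$; together they yield $(|h|_1+\tfrac{d}{2}\norm{\Sigma}^{1/2}|h|_2)C$.

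The main obstacle I anticipate is the index bookkeeping in the subtraction step. With a general matrix $\Lambda$ in place of the scalar $\lambda$ of \cite{Chatterjee2007}, one is juggling three nested sums weighted by $(\Lambda^{-1})_{k,i}$, and has to relabel dummy indices so that the $\Sigma$-substitution and the exchangeable-pair identity produce exactly the same weighted triples $(\Lambda^{-1})_{k,i}(W'_i-W_i)(W'_j-W_j)\partial_{jk}f_h(W)$, so that upon subtraction a genuine covariance remains rather than a difference of two terms of the same order as the targeted bound. Once that alignment is arranged, everything else is routine Cauchy--Schwarz and Taylor estimation, and the constants $\tfrac14$, $\tfrac{1}{12}$, $\tfrac12$ in the statement fall directly out of the Stein-solution bounds $|f_h|_k \leq \tfrac1k|h|_k$.
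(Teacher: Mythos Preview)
Your proposal is correct and follows exactly the approach of the cited paper \cite{Reinert2009}; note that the present paper does not reprove the theorem but merely quotes it, remarking only that the argument rests on the Stein characterization~\eq{4}. Your outline---solving the Stein equation via the Mehler semigroup to obtain $|f_h|_k\le \tfrac1k|h|_k$, rewriting $\IE[W^t\nabla f_h(W)]$ through the exchangeability identity and a second-order Taylor expansion, expressing $\Sigma$ via $2\Lambda\Sigma=\IE(W'-W)(W'-W)^t+2\IE[RW^t]$, and then subtracting so that only a covariance survives---is precisely the proof in \cite{Reinert2009}, and your bookkeeping of the constants $\tfrac14$, $\tfrac1{12}$, $\tfrac12 d\norm{\Sigma}^{1/2}$ is accurate.
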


The proof of Theorem~\ref{thm1} is based on the Stein characterization of the
normal distribution that  ${Y} \in \IR^d$ is a multivariate normal
$\MVN(0,\Sigma)$ if and only if
\ben                                                                \label{4} 
    \IE Y^t \nabla f(Y)  
    = \IE \nabla^t \Sigma \nabla f(Y), 
    \quad \text{for all smooth $f:{\IR}^d \rightarrow {\IR}$}. 
\ee

In the paper by Meckes \cite{Meckes2009} a different norm for functions and for
operators is used, to obtain a similar result, and the difference in the bounds depending on the chosen norm 
is illustrated for the
example of runs on the line. 

Theorem~\ref{thm1} can be extended to allow for covariance matrices which are
not full rank, using the triangle inequality in conjunction with the following
proposition.  

\begin{proposition}[c.f.~Proposition~2.9 \cite{Reinert2009}] \label{lem} Let
$X$ and $Y$ be $\IR^d$-valued normal variables with distributions $X \sim
\MVN(0, \Sigma)$ and $Y \sim \MVN(0, \Sigma_0)$, where $\Sigma =
(\sigma_{i,j})_{i,j=1, \ldots, d}$ has full rank, and $\Sigma_0=
(\sigma^0_{i,j})_{i,j=1, \ldots, d}$ is non-negative definite. Let $h:\IR^d\to
\IR$ have 3 bounded derivatives. Then
\be
    \babs{\IE h(X)-\IE h(Y)}
    \leq \frac{1}{2}  \abs{h}_2 \sum_{i, j=1}^d \abs{\sigma_{i,j} -
\sigma^0_{i,j}}. 
\ee
\end{proposition}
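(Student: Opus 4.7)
The plan is to use Gaussian interpolation (a Lindeberg-style argument) combined with Stein's lemma for Gaussian integration by parts.

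First, I would realize $X$ and $Y$ on a common probability space as independent vectors, writing $Y'$ for an independent copy of $Y$. Then I would introduce the interpolation
\be
    Z_t = \sqrt{t}\, X + \sqrt{1-t}\, Y',\qquad t\in[0,1],
\ee
which is a Gaussian vector with $Z_t \sim \MVN\bigl(0,\,t\Sigma + (1-t)\Sigma_0\bigr)$; in particular $Z_0 \stackrel{d}{=} Y$ and $Z_1 = X$. The aim is to write $\IE h(X) - \IE h(Y) = \int_0^1 \frac{d}{dt}\IE h(Z_t)\,dt$ and bound the integrand using only $|h|_2$.

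The key computation is to differentiate under the expectation, giving
\be
    \frac{d}{dt}\IE h(Z_t)
    = \tfrac12\, \IE\bbbkle{ \sum_{i=1}^d \partial_i h(Z_t)
       \bbklr{\tfrac{X_i}{\sqrt{t}} - \tfrac{Y'_i}{\sqrt{1-t}}} }.
\ee
Now I would apply Gaussian integration by parts to each term. Conditioning on $Y'$ and using that $X$ is a centred Gaussian with covariance $\Sigma$, Stein's identity yields
\be
    \IE\kle{X_i \, \partial_i h(Z_t)}
    = \sqrt{t}\,\sum_{j=1}^d \sigma_{i,j}\,\IE\kle{\partial_{i}\partial_j h(Z_t)},
\ee
and symmetrically for $Y'$ with $\Sigma_0$ in place of $\Sigma$. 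Substituting back gives the clean identity
\be
    \frac{d}{dt}\IE h(Z_t)
    = \tfrac12 \sum_{i,j=1}^d (\sigma_{i,j}-\sigma^0_{i,j})\,\IE\kle{\partial_{i}\partial_j h(Z_t)},
\ee
and integration over $t\in[0,1]$ followed by the triangle inequality and the uniform bound $|\partial_i\partial_j h|\leq |h|_2$ delivers the stated estimate.

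There is no real obstacle here; the only care is to justify differentiation under the expectation and the use of Stein's identity at the endpoints $t=0$ and $t=1$, which is handled by the boundedness of the derivatives of $h$ (the assumption of $3$ bounded derivatives is more than enough for this step; in fact one only needs $|h|_2<\infty$ for the interpolation to go through, but the stronger hypothesis aligns with Theorem~\ref{thm1} so that Proposition~\ref{lem} can be chained with it via the triangle inequality in the rank-deficient case).
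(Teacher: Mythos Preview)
Your interpolation argument (the Slepian smart-path method) is correct. Note, however, that the present paper does not itself supply a proof of this proposition; it is quoted from \cite{Reinert2009} (Proposition~2.9 there), where the argument goes through the Stein equation for $\MVN(0,\Sigma)$: one takes the solution $f$ of $\nabla^t\Sigma\nabla f(w) - w^t\nabla f(w) = h(w) - \IE h(X)$, uses Gaussian integration by parts to rewrite $\IE\bigl[Y^t\nabla f(Y)\bigr]$ as $\IE\bigl[\nabla^t\Sigma_0\nabla f(Y)\bigr]$, and then bounds the resulting expression $\sum_{i,j}(\sigma_{i,j}-\sigma^0_{i,j})\,\IE\,\partial_i\partial_j f(Y)$ via the known second-derivative estimate for the Stein solution. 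Both routes ultimately hinge on Gaussian integration by parts; yours trades the Stein-equation machinery and its solution bounds for the interpolation scaffolding, which is arguably more elementary and self-contained. A small bonus of your approach is that it never invokes the full-rank hypothesis on $\Sigma$ (needed in the Stein-equation route to guarantee a well-behaved solution $f$), so your argument in fact covers arbitrary non-negative definite $\Sigma$ and $\Sigma_0$. Your treatment of the endpoints is also fine: after integration by parts the apparent $t^{-1/2}$ and $(1-t)^{-1/2}$ factors cancel, leaving an integrand uniformly bounded on $(0,1)$, and continuity of $t\mapsto\IE h(Z_t)$ on $[0,1]$ closes the argument.
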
  

%\subsection{Non-smooth test functions}\label{sec3}

For non-smooth test functions, following Rinnot and Rotar \cite{Rinott1996}, let
$\Phi$ denote the standard normal distribution in $\RR^d$, and $\phi$ the
corresponding density function. For $h: \RR^d \rightarrow R$ set
\ba 
    h_\delta^+ (x) &= \sup\{h(x+y): \vert y \vert \leq\delta\}, \quad
    h_\delta^-(x)  = \inf\{h(x+y): \vert y \vert \leq\delta\},\\
   \mbox{ and }  {\tilde h} (x, \delta) &= h_\delta^+ (x) - h_\delta^-(x). 
\ee
Let $ {\cal H} $ be a class of measurable functions $\RR^d \rightarrow R$ which
are uniformly bounded by 1. Suppose that for any $h  \in{\cal H} $,  
%\begin{enumerate}
 for any $\delta > 0$, $h_\delta^+ (x)$ and $h_\delta^-(x)$ are in ${\cal
H}$;  
 for any $d \times d $ matrix $A$ and any 
vector $b \in \RR^d$, $h(Ax+b) \in {\cal H}$; and 
for some constant $a = a( {\cal H}, \delta)$,
$
 \sup_{h \in{\cal H}} \left\{ \int_{\RR^d} {\tilde h} (x, \delta) \Phi (dx)
  \right\} \leq a \delta $. 
%\end{enumerate} 
Obviously we may assume $a \ge
1$. The class of indicators of measurable convex sets is such a class where $a
\leq 2 \sqrt{d}$; see the paper by Bolthausen and G\"otze \cite{Bolthausen1993}.

Let $W $ have mean vector
$0$ and covariance matrix $\Sigma$. If $\Lambda$ and $R$ are such that
\eq{1} is satisfied for $W$, then $Y = \Sigma^{-1/2} W$ satisfies \eq{1} with
$    \hat\Lambda = \Sigma^{-1/2} \Lambda \Sigma^{1/2} \text{ and } R'
    = \Sigma^{-1/2} R.$ 
With
\be
  \hat\lambda^{(i)} = \sum_{m=1}^d \abs{ (\Sigma^{-1/2} \Lambda^{-1}
  \Sigma^{1/2} )_{m,i} }
\ee
as well as 
\ban                                                             %\label{7} 
  A' & = \sum_{i,j} \^\lambda^{(i)} 
  \sqrt{\Var \IE^Y  
  \sum_{k, \ell} \Sigma^{-1/2}_{i,k} \Sigma^{-1/2}_{j,\ell}(W_k'
    - W_k)(W_\ell' - W_\ell) } , \\                               %\label{8} 
  B' &=  \sum_{i,j,   k }  \hat\lambda^{(i)} 
  \IE\bbbabs{ 
  \sum_{ r,s,t} \Sigma^{-1/2}_{i,r} \Sigma^{-1/2}_{j,s } \Sigma^{-1/2}_{k,
    t}(W_r'-W_r)(W_s'- W_s)(W_t' - W_t)}
\ee 
and 
\be                                                               \%label{9} 
  C' = \sum_{i} \hat\lambda^{(i)}\sqrt{\IE \bbklr{  \sum_k \Sigma^{-1/2}_{i,k}
    {R_k}}^2  },
\ee
we have the following result \cite{Reinert2009}.

\begin{corollary}\label{cor1} 
Let $W$ be as in Theorem~\ref{thm1}. Then, for all $h \in {\cal H}$ with $\vert
h \vert \leq1$,  there exist $\gamma=\gamma(d)$ and $a > 1$ such that
%, with the
%notation \eq{9}, \eq{7}, and \eq{8},  
\be 
   \sup_{h \in{\cal H}} \abs{ \IE h(W) - \IE h(Z) }  
    \leq  \gamma^2\bbklr{  -  D' \log (T') + 
    \frac{B'}{ 2\sqrt{T'} } + C'  + a \sqrt{T'} }, 
\ee
with 
\be 
T' = \frac{1}{a^2 } \bbbklr{D' +  \sqrt{\frac{aB'}{2} +D'^2 } }^2 
\qquad\text{ and }\qquad
D' =  \frac{A'}{2}  +  C'  d. 
\ee
\end{corollary}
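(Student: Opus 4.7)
The plan is to reduce the problem to one with standard normal target and then apply Theorem~\ref{thm1} to a smoothed surrogate of $h$. First, I would set $Y = \Sigma^{-1/2} W$. A direct substitution shows that $(Y, Y') := (\Sigma^{-1/2}W, \Sigma^{-1/2}W')$ is again an exchangeable pair with $\IE Y Y^t = \Id$, and that \eq{1} is inherited in the form
\be
  \IE^{Y}(Y' - Y) = -\hat\Lambda\, Y + R', \qquad \hat\Lambda = \Sigma^{-1/2}\Lambda\Sigma^{1/2},\quad R' = \Sigma^{-1/2} R.
\ee
Moreover, $\mathcal{H}$ is stable under affine transformations, so it suffices to estimate $\sup_{h\in\mathcal H}|\IE h(Y) - \IE h(Z)|$. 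The quantities $A'$, $B'$, $C'$ defined before the corollary are precisely those produced when Theorem~\ref{thm1} is applied in the $Y$-coordinates, once the inner $(W'-W)$-differences are rewritten via $Y'_i - Y_i = \sum_k \Sigma^{-1/2}_{i,k}(W'_k - W_k)$.

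Next, for a fixed $\delta>0$, I would introduce the Gaussian mollification $h_\delta(x) := \IE h(x + \delta Z)$ and use standard estimates for derivatives of $\phi$ to obtain bounds of the schematic form $\abs{h_\delta}_1 \lesssim \gamma(d)$, $\abs{h_\delta}_2 \lesssim \gamma(d)/\delta$, and $\abs{h_\delta}_3 \lesssim \gamma(d)/\delta^2$, where $\gamma(d)$ absorbs the dimension-dependent constants coming from integrating $\abs{\nabla^k\phi}$. Applying Theorem~\ref{thm1} to $h_\delta$ in the $Y$-coordinates then yields a bound of the shape
\be
  \babs{\IE h_\delta(Y) - \IE h_\delta(Z)} \;\lesssim\; \gamma(d)\bbklr{\frac{A'}{\delta} + \frac{B'}{\delta^2} + C' + \frac{d\,C'}{\delta}}.
\ee
Since $D' = A'/2 + C' d$ captures exactly the terms with a $1/\delta$ scaling, this rearranges to $\gamma(d)(D'/\delta + B'/\delta^2 + C')$.

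The remaining task is to control the non-smooth part $|\IE h(Y) - \IE h_\delta(Y)|$ and $|\IE h(Z) - \IE h_\delta(Z)|$. Using $h_\delta^-(x) \le h_\delta(x) \le h_\delta^+(x)$ and the closure properties of $\mathcal H$, these differences are dominated by quantities of the form $\IE \tilde h(Z,\delta)$, which by assumption are bounded by $a\delta$; the transfer from $Z$ to $Y$ costs another application of the Theorem~\ref{thm1} bound, now to $h_\delta^\pm \in \mathcal H$, which ultimately contributes the $a\sqrt{T'}$ term after optimisation. Here the logarithmic factor $-D' \log T'$ emerges from integrating the smoothing estimate against a $1/\delta$ weight when the concentration of $Y$ near the level sets of $h$ is estimated via a dyadic/level-set argument, exactly as in Bolthausen--G\"otze and Rinott--Rotar.

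The main obstacle, and the step I expect to be the most delicate, is choosing $\delta$ to balance the three competing error terms $\gamma(d) D'/\delta$, $\gamma(d) B'/\delta^2$, and $a\delta$, and then carrying the resulting expression back through the mollification estimate to produce exactly the quantity
\be
  T' = \frac{1}{a^2}\bbklr{D' + \sqrt{aB'/2 + D'^2}}^2,
\ee
which is the critical point of the quadratic-in-$1/\sqrt{\delta}$ minimisation. Verifying that the residual logarithmic term combines correctly with the smoothing error $a\delta$ and the $C'$ contribution requires careful bookkeeping, but no new ingredient beyond Theorem~\ref{thm1}, the reduction above, and the class properties of $\mathcal H$.
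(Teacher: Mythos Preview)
The paper does not actually prove this corollary; it is quoted from \cite{Reinert2009} and stated without proof here. So your sketch must be compared against the argument in \cite{Reinert2009}, which follows the smoothing technique of \cite{Rinott1996} and \cite{Bolthausen1993}.

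Your high-level plan --- standardise to $Y=\Sigma^{-1/2}W$, smooth $h$, apply Theorem~\ref{thm1}, then optimise --- is correct. But there is a genuine gap in the mechanism you describe. For a merely bounded $h$, Gaussian mollification gives $\abs{h_\delta}_k \lesssim \delta^{-k}$, not $\delta^{-(k-1)}$ as you claim; there is no Lipschitz control available to buy back one power of~$\delta$. Plugging $\abs{h_\delta}_2\lesssim\delta^{-2}$ and $\abs{h_\delta}_3\lesssim\delta^{-3}$ into Theorem~\ref{thm1} produces terms of order $A'/\delta^2$ and $B'/\delta^3$, and no optimisation over $\delta$ then recovers the shape $-D'\log T' + B'/(2\sqrt{T'})$ of the corollary.

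The argument in \cite{Reinert2009} does not apply Theorem~\ref{thm1} to $h_\delta$ via its raw derivative bounds. Instead one works with the Stein equation solution $f_t$ associated to the smoothed test function $h_t$ and uses the refined estimates (essentially those of \cite{Rinott1996}) that the \emph{second} derivatives of $f_t$ are of order $\log(1/t)$ and the \emph{third} derivatives of order $t^{-1/2}$; this is precisely what produces the logarithm on $D'$ and the $1/\sqrt{T'}$ on $B'$. The remaining ingredient is the recursive smoothing inequality: the error $\abs{\IE h(Y)-\IE h_t(Y)}$ is controlled by $a\sqrt t$ plus a multiple of $\kappa:=\sup_{h\in\mathcal H}\abs{\IE h(Y)-\IE h(Z)}$ itself, yielding an implicit inequality in $\kappa$ that is then solved and optimised in $t$ to give $T'$. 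Your ``dyadic/level-set'' explanation for the logarithm and the single ``transfer'' step do not capture this; the recursion and the specific Stein-solution derivative bounds are the missing ideas.
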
 

\begin{remark} \label{cor2} 
We can simplify the above bound further. Using Minkowski's inequality we have
that $
    \Var \sum_{i=1}^k X_i       \leq k^2\sup_i {\Var X_i}, $ 
and thus obtain the simple estimate 
\bes
&\Var \IE^Y  
\sum_{k, \ell} \Sigma^{-1/2}_{i,k} \Sigma^{-1/2}_{j,\ell}(W_k'
- W_k)(W_\ell' - W_\ell) \\
&\qquad\leq
    d^4 \norm{\Sigma^{-1/2}}^4\sup_{k,\ell} 
        \Var\IE^W\bklg{(W_k'-W_k)(W_\ell'-W_\ell)
}
\ee
and hence
\be                                                               %\label{10} 
A' \leq  d^3 \norm{\Sigma^{-1/2}}^2  \sum_{i} \hat\lambda^{(i)}\sup_{k,\ell}
\sqrt{ \Var\IE^W\bklg{(W_k'-W_k)(W_\ell'-W_\ell)}  };                           
\ee 
in $B'$ and $C'$ we could similarly bound   $\Sigma^{-1/2}_{i,k}$ by
$\norm{\Sigma^{-1/2}}$ to obtain
a simpler bound.    There are however examples, such as the random graph example
in Section~\ref{randomgraphs},
where $\norm{\Sigma^{-1/2}}$ provides a non-informative bound. 
\end{remark}

\begin{remark} Note that, if $(W,W')$ is exchangeable and \eq{1} is satisfied we
have
\besn                                                           \label{11}
    \IE (W'-W)(W'-W)^t 
    &= 2\IE W(\Lambda W)^t = 2\Sigma\Lambda^t
\ee
On the other hand, if we only have $\law(W)=\law(W')$, we obtain
\ben                                                            \label{12}
    \IE (W'-W)(W'-W)^t = \Lambda \Sigma + \Sigma\Lambda^t.
\ee
Hence, to check in an application whether the often tedious calculation of
$\Sigma$
and $\Lambda$ has been carried out correctly, we can combine Equations \eq{11}
and~\eq{12}, to conclude that, under the conditions of
Theorem~\ref{thm1}, we must have $\Lambda\Sigma = \Sigma\Lambda^t$.
\end{remark}

%%%%%%%%%%%%%%%%%%%%%%%%%%%%%%%%%%%%%%%%%%%%%%%%%%%%%%%%%%%%%%%%%%%%%%%%%%%%%%%
%%%%%%%%%%%%%%%%%%%%%%%%%%%%%%%%%%%%%%%%%%%%%%%%%%%%%%%%%%%%%%%%%%%%%%%%%%%%%%%

%%%%%%%%%%%%%%%%%%%%%%%%%%%%%%%%%%%%%%%%%%%%%%%%%%%%%%%%%%%%%%%%%%%%%%%%%%%%%%%
%%%%%%%%%%%%%%%%%%%%%%%%%%%%%%%%%%%%%%%%%%%%%%%%%%%%%%%%%%%%%%%%%%%%%%%%%%%%%%%

\section{The embedding method}
\label{sec4}

Assume that an $\ell$-dimensional random vector $W_{(\ell)}$ of
 interest is given. Often, the construction of an exchangeable pair
 $(W_{(\ell)},W_{(\ell)}')$ is
 straightforward. If, say, $W_{(\ell)}=W_{(\ell)}(\IX)$ is a function of i.i.d.\
random
 variables
 $\IX=(X_1,\dots,X_n)$, one can choose uniformly an index $I$ from $1$ to $n$, 
 replace $X_I$ by an independent copy $X'_I$, and define
$W_{(\ell)}':=W_{(\ell)}(\IX')$,
 where
 $\IX'$ is now the vector $\IX$ but with $X_I$ replaced by $X'_I$.
 
 In general there is  no hope that $(W_{(\ell)}, W_{(\ell)}')$ will satisfy
 Condition~\eq{1} with $R$ being of the required smaller order or even equal
 to zero, so that in this case Theorem~\ref{thm1} would not yield useful
 bounds.

 Surprisingly often it is possible, though, to extend $W_{(\ell)}$ to a vector
$W \in \IR^d$  such that we can construct an exchangeable pair $(W,W')$ which
satisfies Condition~\eq{1} with $R=0$. If we can bound the distance of the
distribution ${\cal L}(W)$ to a $d$-dimensional multivariate normal
distribution, then a bound on the distance  of the distribution ${\cal
L}(W_{(\ell)})$ to an $\ell-$dimentional  multivariate normal distribution
follows immediately. 

In
order to obtain useful bounds in Theorem \ref{thm1}, the embedding dimension $d$
should not be too large. In the examples below it will be obvious how to choose
$ W^{(d -\ell)}$ to make the construction work.

As a first illustration of the method, it was observed in 
\cite{Nourdin2009b} that for functions which depend on the first $d$ coordinates
of an infinite Rademacher sequence, that is, a sequence of symmetric $\{-1, 1\}$
random variables, the 
natural embedding vector is a vector of Rademacher integrals  of lower order. A
similar construction works fairly generally, as follows. 
Assume that $F = F(X_1,...,X_d)$ is a random variable that depends uniquely on
the first $d$ coordinates
 of a  sequence $X$ of i.i.d. mean zero random variables, with $E(F)=0$ and
$E(F^2)=1$, of the form 
\begin{eqnarray} \label{fdec}
F & = &  \sum_{n=1}^d \sum_{1\leq i_1<...<i_n\leq d }   n! f_n (i_1 , ..., i_n)
X_{i_1}\cdot \cdot \cdot X_{i_n} =:  \sum_{n=1}^d J_n(f_n);
\end{eqnarray}
such representations occur as chaotic decompositions for functionals of
Rade\-macher sequences. 
A natural exchangeable pair construction is as follows. Pick an index $I$  so that $P(I=i) = \frac{1}{d}$ for $i=1, \ldots, d$, independently of
$X_1,...,X_d$,
and if $I=i$ replace $X_i$ by an independent copy $X_i^*$ in all sums in the
decomposition \eqref{fdec}
which involve $X_i$. Call the resulting expression $F'$, and the corresponding
sums $J_n'(f_n); n=1, \ldots, d$. 
Now choosing as
embedding vector $ {W} = (J_1(f_1), \ldots, J_d (f_d)) ,$ we check that for all
$n=1, \ldots, d$, 
\begin{eqnarray*}
\lefteqn{ E( J_n'(f_n)-J_n(f_n) | {W} ) } \notag
\\ &=& - \frac{1}{d} \sum_{i=1}^d \sum_{1\leq i_1<...<i_n\leq d } 
\mathbf{1}_{\{ i_1, \ldots, i_n\}}(i)\,\, n! f_n
(i_1 , ..., i_n) E( X_{i_1}\cdot \cdot \cdot X_{i_n} | {W}) \notag \\
&=&-\frac{n}d\,J_n(f_n).
\end{eqnarray*}
Thus, with $ {W'} = (J_1'(f_1), \ldots, J_d' (f_d)) ,$ the condition \eqref{1}
is satisfied, with $\Lambda= (\lambda_{i,j})_{1\leq i,j\leq d}$ being zero off
the diagonal and $\lambda_{n,n} = \frac{n}{d}$ for $n=1, \ldots, d$. Note that,
although diagonal, the diagonal entries of this $\Lambda$ are not equal. It is
not possible to correct this by simple coordinate-wise scaling of $W$ as this
will change $\Sigma$ only and leave $\Lambda$ unaffected; see 
also the discussion in  \cite[Section~5]{Reinert2009}. Hence, again, the
generality of $\eq{1}$ is essential here.

\section{Complete non-degenerate $U$-statistics} \label{secu} 

Using the exchangeable pairs coupling, Rinott and Rotar \cite{Rinott1997} proved
a univariate normal approximation theorem for non-degenerate weighted
$U$-statistics with symmetric weight function under fairly mild conditions on
the weights. Using the typical coupling, where uniformly a random variable $X_i$
is choosen and replaced by an independent copy, they show that \eq{1} is
satisfied for the one-dimensional case and a non-trivial remainder term, being 
Hoeffding projections of smaller order. It should not be difficult (but
nevertheless cumbersome) to generalise their result to the multivariate case,
where $d$ different $U$-statistics are regarded based on the same sample of
independent random variables, such that \eq{1} is satisfied with $\Lambda=I$ and
non-trivial remainder term, again of lower order; for multivariate
approximations of several $U$-statistics see also the book by
Lee~\cite{Lee1990}.
However, as we want to emphasize  the use of Theorem~\ref{thm1} for non-diagonal
$\Lambda$, we take a different approach.

Let $X_1,\dots,X_n$ be a sequence of i.i.d.\ random elements taking values in
a measure space $\%X$. Let $\psi$ be a measurable and symmetric function from
$\%X^d$ to $\IR$, and, for each $k=1,\dots, d$, let
\be 
    \psi_k(X_1,\dots,X_k) := \IE\bklr{\psi(X_1,\dots,X_d)\bmid X_1,\dots,X_k }.
\ee
Assume without loss of generality that $\IE\psi(X_1,\dots,X_d) = 0$. For any
subset $\alpha\subset\{1,\dots,n\}$ of size $k$ write
$\psi_k(\alpha):=\psi(X_{i_1},\dots,X_{i_k})$ where the $i_j$ are the elements
of $\alpha$. Define the statistics
\be
    U_k :=\sum_{\abs{\alpha}=k}\psi_k(\alpha),
\ee 
where
$\sum_{E(\alpha)}$ denotes summation over all subsets
$\alpha\subset\{1,\dots,n\}$ which satisfy the property~$E$. Then $U_d$
 coincides with the usual $U$-statistics with kernel~$\psi$. Assume that $U_d$
is non-degenerate, that is, $\IP[\psi_1(X_1) = 0]< 1$. Put 
\be
    W_k := n^{1/2}{n\choose k}^{-1} U_k.
\ee
It is well known that $\Var W_k \asymp 
1$ (e.g.\ \cite{Lee1990}). Note also that, as $n\toinf$, $\Sigma:=\IE(WW^t)$
will
converge to a
covariance matrix with all entries equal to $\Var \psi_1(X_1)$ and which is
thus of rank $1$, as we assume non-degeneracy and hence $U_1 =
\sum_{i=1}^n\psi_1(X_i)$ will dominate the behaviour of each~$U_k$.

Using Stein's method and the approach of decomposable random variables,
Rai\v{c}~\cite{Raic2004} proved rates of convergence for vectors of
$U$-statistics,
where the coordinates are assumed to be uncorrelated (but nevertheless based
upon the same sample $X_1,\dots,X_n$). The next theorem can be seen as a
complement to Rai\v{c}'s results, as in our case from above, a normalization is
not appropriate.

\begin{theorem}\label{thm2}
With the above notation, and if $\rho := \IE \psi(X_1,\dots,X_d)^4<\infty$
we have for every three times differentiable function $h$
\be 
    \abs{\IE h(W) - \IE h(\Sigma^{1/2} Z)} \leq n^{-\frac{1}{2}} 
   \left( 4\rho^{1/2}d^6\abs{h}_2 + \rho^{3/4}d^7\abs{h}_3 \right). 
\ee
\end{theorem}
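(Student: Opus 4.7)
The plan is to verify the linearity condition \eqref{1} with $R=0$ for the natural exchangeable pair, and then to control the bound terms $A$, $B$, and $C$ appearing in Theorem~\ref{thm1}. The embedding into the full vector $W=(W_1,\dots,W_d)$, rather than working with $W_d$ alone, is precisely what makes the remainder vanish exactly.

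I would construct $(W,W')$ by picking an index $I$ uniformly on $\{1,\dots,n\}$ and replacing $X_I$ by an independent copy $X_I^*$ to form $W'$. Writing $\Delta U_k = \sum_{\alpha\ni I,\,\abs\alpha=k}[\psi_k(\alpha^*)-\psi_k(\alpha)]$, where $\alpha^*$ replaces $X_I$ by $X_I^*$, averaging first over $X_I^*$ via $\IE[\psi_k\mid X_1,\dots,X_{k-1}]=\psi_{k-1}$ and then over the uniform $I$, a short combinatorial count gives
\be
  \IE\bklr{U_k'-U_k\bmid\mathbf{X}} = \frac{n-k+1}{n}\,U_{k-1}-\frac{k}{n}\,U_k.
\ee
Rescaling via the identity $\binom{n-1}{k-1}/\binom{n}{k}=k/n$ yields
\be
  \IE\bklr{W_k'-W_k\bmid\mathbf{X}} = \frac{k}{n}\bklr{W_{k-1}-W_k},\qquad W_0:=0,
\ee
which is already $\sigma(W)$-measurable, so \eqref{1} holds with $R=0$ and $\Lambda$ the lower bidiagonal matrix $\Lambda_{kk}=k/n$, $\Lambda_{k,k-1}=-k/n$.

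Solving $\Lambda v = u$ by back substitution gives $(\Lambda^{-1})_{mi}=n/i$ for $m\geq i$ and $0$ otherwise, so $\lambda^{(i)}=n(d-i+1)/i\leq nd$. By convexity of $x\mapsto\abs{x}^p$ and the conditional Jensen bound $\IE\abs{\psi_k}^p\leq\IE\abs{\psi}^p\leq\rho^{p/4}$, the increment moments satisfy
\be
  \IE\abs{W_k'-W_k}^p\leq 2^p k^p\rho^{p/4}n^{-p/2},\qquad p\in\{2,3,4\}.
\ee
H\"older applied to the triple products entering $B$, combined with the bound $\lambda^{(i)}\leq nd$ and $\ssum_i i=\bigo(d^2)$, then yields $B=\bigo(\rho^{3/4}d^7n^{-1/2})$; the term $C$ vanishes because $R=0$.

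The principal obstacle is the bound on $A$. The naive estimate $\Var\IE^W[(W_i'-W_i)(W_j'-W_j)]\leq\IE[(W_i'-W_i)^2(W_j'-W_j)^2]$ is only of order $n^{-2}$, which is short by a factor of $n^{-1}$. The missing factor should come from the averaging
\be
  \IE^{\mathbf{X}}\bkle{\Delta U_i\Delta U_j} = \frac{1}{n}\ssum_{l=1}^n V_l(\mathbf{X}),\qquad
  V_l=\IE_{X_l^*}\bkle{\Delta U_i^{(l)}\Delta U_j^{(l)}},
\ee
where $\Delta U_k^{(l)}$ denotes the change in $U_k$ upon replacement of $X_l$. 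This is transparent when $i=j=1$: a direct computation gives $\IE^{\mathbf{X}}[(W_1'-W_1)^2]=n^{-1}\IE\psi_1^2 + n^{-2}\ssum_l\psi_1(X_l)^2$, whose variance equals exactly $n^{-3}\Var\psi_1(X_1)^2\leq\rho n^{-3}$. For general $i$ and $j$ I would apply the Efron--Stein inequality to the symmetric sum $\ssum_l V_l$, carefully counting, for each replacement index $m$, the overlapping pairs $(\alpha,\beta)$ with $l\in\alpha\cap\beta$ and $m\in\alpha\cup\beta$ that are actually affected by swapping $X_m$. The resulting bound should take the form $\sqrt{\Var\IE^W[(W_i'-W_i)(W_j'-W_j)]}\leq C(ij)^2\rho^{1/2}n^{-3/2}$, and summing against the $\lambda^{(i)}$-weights using the finer estimate $\ssum_i\lambda^{(i)} i^2 = nd(d+1)(d+2)/6=\bigo(nd^3)$ gives $A=\bigo(\rho^{1/2}d^6n^{-1/2})$. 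Plugging $A$, $B$, and $C=0$ into Theorem~\ref{thm1} then produces the stated inequality. If $\Sigma=\IE WW^t$ fails to be positive definite (as can happen for degenerate kernels, for instance additive $\psi$), one smoothes $W$ by an independent Gaussian of small variance, applies the argument to the smoothed vector, controls the perturbation via Proposition~\ref{lem}, and sends the smoothing parameter to zero.
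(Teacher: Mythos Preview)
Your proposal is correct and follows essentially the paper's route: the exchangeable pair, the computation of $\Lambda$, the formula $(\Lambda^{-1})_{mi}=n/i$, the bound $\lambda^{(i)}\leq nd$, and the H\"older estimate for $B$ all coincide with the paper's proof verbatim.

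The only real point of divergence is the treatment of $A$. The paper does not use Efron--Stein; instead it conditions on the larger $\sigma$-field $(X,X')$ (averaging only over the uniform index $J$), expands
\be
  \IE\bklr{\IE^{X,X'}[(U_k'-U_k)(U_l'-U_l)]}^2
  =\frac{1}{n^2}\sum_{i,j}\sum_{\alpha\cap\beta\ni i}\sum_{\gamma\cap\delta\ni j}
  \IE\bklg{\eta_{i,k}(\alpha)\eta_{i,l}(\beta)\eta_{j,k}(\gamma)\eta_{j,l}(\delta)},
\ee
and observes that after subtracting the squared mean only those quadruples with $(\alpha\cup\beta)\cap(\gamma\cup\delta)\neq\emptyset$ survive. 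Bounding each surviving summand by $32\rho$ and counting them (splitting on whether $j\in\alpha\cup\beta$) yields $\Var\IE^W[(W_k'-W_k)(W_l'-W_l)]\leq 256\rho d^6 n^{-3}$, uniformly in $k,l$. Your Efron--Stein sketch is a close cousin: the overlap condition ``$l\in\alpha\cap\beta$, $m\in\alpha\cup\beta$'' that you isolate is exactly the combinatorics behind the paper's split into $A_{k,l}+B_{k,l}$, so the counting would be the same and the order $n^{-3}$ would come out. The paper's direct expansion has the advantage of delivering the explicit constant $16$ on $\sqrt{\Var}$ that lands precisely on the stated coefficient $4\rho^{1/2}d^6$, whereas your approach leaves the constant implicit. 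Your closing remark about Gaussian smoothing when $\Sigma$ is singular is additional care not present in the paper, which simply invokes Theorem~\ref{thm1} under the standing non-degeneracy assumption.
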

\begin{proof}

Let $X_1',\dots,X_n'$ be independent copies of $X_1,\dots,X_n$. Define the
random variables $\psi'_{j,k}(\alpha)$ analogously to $\psi_k(\alpha)$
but based on the sequence $X_1,\dots,X_{j-1},X'_j,X_{j+1},\dots,X_n$. 
Define the coupling as in \cite{Rinott1997}, that is, pick uniformly an index
$J$ from $\{1,\dots,n\}$ and replace $X_J$ by $X'_J$, so that $U'_k
= \sum_{\abs{\alpha}=k}\psi'_{J,k}(\alpha)$; it is easy to see that $(U',U)$ is
exchangeable. Note now that, if $j\not\in\alpha$,
$\psi'_{j,k}(\alpha)=\psi_k(\alpha)$, and, with $X=(X_1,\dots,X_n)$, that
$\IE^X\psi'_{j,k}(\alpha) = \psi_{k-1}(\alpha\setminus\{j\})$ if $j\in\alpha$.
Thus
\besn                                                           \label{14}
    \IE^{X} (U'_k - U_k) 
    &= \frac{1}{n}
    \sum_{j=1}^n\sum_{\abs{\alpha}=k,\atop \alpha\ni j}
     \IE^X\bklg{\psi'_{j,k}(\alpha)-\psi_k(\alpha)}\\
    &=-\frac{k}{n}U_k + \frac{1}{n}
    \sum_{j=1}^n\sum_{\abs{\alpha}=k,\atop \alpha\ni j}
     \psi_{k-1}(\alpha\setminus\{j\})\\
    &=-\frac{k}{n}U_k + \frac{n-k+1}{n}
    \sum_{\abs{\beta}=k-1} \psi_{k-1}(\beta)\\
    & = -\frac{k}{n} U_k + \frac{n-k+1}{n} U_{k-1},
\ee
where the second last equality follows from the observation that 
\be 
    \sum_{\abs{\alpha}=k,\atop\alpha\ni j}
   \psi_{k-1}(\alpha\setminus\{j\})
    = \sum_{\abs{\beta}=k-1,\atop\beta\not\ni j}
   \psi_{k-1}(\beta),
\ee
and thus, in the corresponding double sum of \eq{14}, every set $\beta$ of size
$k-1$
appears exactly $n-(k-1)$ times. Thus
\be
    \IE^{X} (W'_k - W_k) = -\frac{k}{n}(W_k - W_{k-1}).  
\ee
Hence, \eq{1} is satisfied for $R=0$ and 
\be
    \Lambda = \frac{1}{n}
    \begin{bmatrix}
    1&&& \\
    -2& 2\\
    &-3 & 3\\
    &&\ddots&\ddots\\
    &&&- d& d\\
    \end{bmatrix},
\ee
with lower triangular $\Lambda^{-1}$ such that, if $l\leq k$,
\be
    (\Lambda^{-1})_{k,l} = n / l,
\ee
thus, for $l=1,\dots,d$,
\ben                                                  \label{15}     
    \lambda^{(l)} \leq d n.
\ee

Define now $\eta_{j,k}(\alpha) := \psi'_{j,k}(\alpha)-\psi_{k}(\alpha)$. Then
we have for every $k,l=1,\dots,d$,
\besn                                                    \label{16}
    \IE^{X,X'}\bklg{(U'_k-U_k)(U'_l-U_l)} 
    &= \frac{1}{n}
    \sum_{j=1}^n\bbbklr{
    \sum_{\abs{\alpha}=k,\abs{\beta}=l,\atop \alpha\cap\beta\ni j}
    \eta_{j,k}(\alpha)\eta_{j,l}(\beta) 
    }
\ee
and
\besn                                                            \label{17}
    &\IE\bklr{\IE^{X,X'}\bklg{(U'_k-U_k)(U'_l-U_l)}}^2 \\
    &\qquad=\frac{1}{n^2}\sum_{i,j=1}^n
    \sum_{\abs{\alpha}=k,\abs{\beta}=l,\atop \alpha\cap\beta\ni i}
    \sum_{\abs{\gamma}=k,\abs{\delta}=l,\atop \gamma\cap\delta\ni j}
    \IE\bklg{\eta_{i,k}(\alpha)\eta_{i,l}(\beta)
    \eta_{j,k}(\gamma)\eta_{j,l}(\delta)  } .
\ee
Note now that, if the sets $\alpha\cup\beta$ and $\gamma\cup\delta$ are
disjoint (which can only happen if $i\neq j$),
\ben  
    \IE\bklg{\eta_{i,k}(\alpha)\eta_{i,k}(\beta)
    \eta_{j,l}(\gamma)\eta_{j,l}(\delta)  } = 
    \IE\bklg{\eta_{i,k}(\alpha)\eta_{i,k}(\beta)}
    \IE\bklg{\eta_{j,l}(\gamma)\eta_{j,l}(\delta) }
\ee
due to independence. The variance of \eq{16}, that is \eq{17} minus the
square of the expectation of \eq{16}, contains only summands where
$\alpha\cup\beta$ and $\gamma\cup\delta$ are not disjoint. Recall now  
that $\rho = \IE\psi(X_1,\dots,X_d)^4$. Bounding all the non-vanishing
terms simply by $32\rho$, it only remains to count the number of non-vanishing
terms.
Thus,
\bes
    &\Var\IE^{X,X'}(U'_k-U_k)(U'_l-U_l) \\
    &\qquad\leq \frac{1}{n^2}\sum_{i,j=1}^n
    \sum_{\abs{\alpha}=k,\abs{\beta}=l,\atop \alpha\cap\beta\ni i}
    \sum_{\abs{\gamma}=k,\abs{\delta}=l,\atop \gamma\cap\delta\ni j,
    (\gamma\cup\delta)\cap(\alpha\cup\beta)\neq\emptyset}
    32\rho\\
    &\qquad= \frac{1}{n^2}\sum_{i=1}^n
    \sum_{\abs{\alpha}=k,\abs{\beta}=l,\atop \alpha\cap\beta\ni i}
    \bbbklr{
    \sum_{j\in\alpha\cup\beta}
    \sum_{\abs{\gamma}=k,\abs{\delta}=l,\atop \gamma\cap\delta\ni j}
    32\rho +
    \sum_{j\not\in\alpha\cup\beta}
    \sum_{\abs{\gamma}=k,\abs{\delta}=l,\atop \gamma\cap\delta\ni j,
    (\gamma\cup\delta)\cap(\alpha\cup\beta)\neq\emptyset}
    32\rho}\\
    &\qquad =: A_{k,l} + B_{k,l},
\ee
where the equality is just a split of the sum
over $j$ into the cases whether or not $j\in\alpha\cup\beta$. In the former
case we automatically have
$(\alpha\cup\beta)\cap(\gamma\cup\delta)\neq\emptyset$. It is now not difficult
to see that
\be
  A_{k,l}\leq \frac{32\rho(k+l-1)}{n}{n-1\choose k-1}^2{n-1\choose l-1}^2.
\ee
Noting that, for fixed $j$, $k$, $l$, $\alpha$ and $\beta$,
\bes
    &\bklg{\abs{\gamma}=k,\abs{\delta}=l\,:\,\gamma\cap\delta\ni j,
    (\gamma\cup\delta)\cap(\alpha\cup\beta)\neq\emptyset}\\
    &\qquad = \bklg{\abs{\gamma}=k,\abs{\delta}=l\,:\,\gamma\cap\delta\ni j}\\
    &\qquad\quad\setminus
    \bklg{\abs{\gamma}=k,\abs{\delta}=l\,:\,\gamma\cap\delta\ni j,
    (\gamma\cup\delta)\cap(\alpha\cup\beta)=\emptyset},
\ee
we further have
\bes
  B_{k,l}&\leq\frac{32\rho(n-1)}{n}{n-1\choose k-1}{n-1\choose l-1}\times\\
  &\qquad\times\bbbklg{{n-1\choose k-1}{n-1\choose l-1}
    -{n-k-l+1\choose k-1}{n-k-l+1\choose l-1}},
\ee
where we also used that ${n-\abs{\alpha\cup\beta}\choose
k-1}\geq{n-k-l+1\choose k-1}$.
The following statements are straightforward to prove:
\bgn     
    {n-1\choose k-1}{n\choose k}^{-1} = \frac{k}{n},            \label{18}\\
    {n-k-l+1\choose k-1}{n\choose k}^{-1}\geq
    \frac{k}{n}\bbbklr{\frac{n-2k-l+3}{n}}^k
    \geq \frac{k}{n}\bbbklr{1-\frac{k(2k+l-3)}{n}}.   \label{19}
\ee
Thus, from \eq{18},
\ben
    n^2{n\choose k}^{-2}{n\choose l}^{-2}A_{k,l} 
    \leq \frac{32\rho (k+l-1) k^2 l^2}{n^3}
    \leq \frac{64\rho d^5}{n^3}.
\ee
From \eq{18} and \eq{19},
\be
    n^2{n\choose k}^{-2}{n\choose l}^{-2}B_{k,l} 
    \leq \frac{32\rho k^2 l^2\bklr{k(2k+l-3)+l(k+2l-3)}}{n^3}
    \leq \frac{192\rho d^6}{n^3}.
\ee
Thus, for all $k$ and $l$,
\besn                                                            \label{20}
    \Var\IE^W(W'_k-W_k)(W'_l-W_l) 
    & \leq \Var\IE^{X,X'}(W'_k-W_k)(W'_l-W_l)\\
    & \leq \frac{256\rho d^6}{n^3}.
\ee
Notice further that for any $m=1,\dots,d$,
\bes
    \IE\abs{U'_m-U_m}^3 
    & = \frac{1}{n}\sum_{j=1}^n\IE\bbabs{
    \sum_{\abs{\alpha}=\abs{\beta}=\abs{\gamma}=m
            \atop\alpha\cap\beta\cap\gamma\ni j} 
    \eta_{j,m}(\alpha)\eta_{j,m}(\beta)\eta_{j,m}(\gamma)
    }\\
    & \leq 8\rho^{3/4}{n-1 \choose m-1}^3,
\ee
using \eq{26}; hence, along with \eq{18},
\besn                                                   \label{21}
    \IE\abs{(W_i'-W_i)(W_k'-W_k)(W_l'-W_l)} 
    &\leq \max_{m=i,k,l}\IE\abs{W_m'-W_m}^3\\
    & \leq 
    8\rho^{3/4} n^{3/2}\max_{m=i,k,l}{n \choose m}^{-3}{n-1 \choose m-1}^3\\
    & \leq 8 \rho^{3/4} d^3 n^{-3/2}.
\ee
Applying Theorem~\ref{thm1} with the estimates \eq{15}, \eq{20} and
\eq{21} proves the claim.
\end{proof}

\begin{remark}
Using the operator norm as used by Meckes \cite{Meckes2009} we would be able to
achieve a bound of $n\log(d+1)$ instead of \eq{15}, but using bounds for the
total derivatives of the test functions,  $\sup_{x \in \RR^k} \| D^r h(x)
\|_{op}$, instead of bounds for $|h|_r$. 
\end{remark}

\section{Edge and triangle counts in Bernoulli random graphs} \label{randomgraphs} 

Typical summaries for random graphs are the degree distribution and the number
of triangles, as a proxy for the clustering coefficient in a random graph, which
is the expected ratio of the number of triangles over the number of 2-stars a
randomly chosen vertex is involved with. Conditional uniform graph tests are
based on fixing the degree distribution and randomising over the edges,
conditional on keeping the degree distribution fixed. Our next example shows
that even when fixing only the number of edges, not even the degree
distribution, under a normal asymptotic regime the number of triangles, and the
number of 2-stars, is already asymptotically determined. 
Let $G(n,p)$ denote a Bernoulli random graph on $n$ vertices, with edge
probabilities $p$;  we assume that $n \ge 4$ and that $0<p<1$. Let $I_{i,j} =
I_{j,i} $ be the Bernoulli($p$)-indicator that edge $(i,j)$ is present in the
graph; these indicators are independent. Our interest is in the joint
distribution of the total number of edges, described by 
\be
    T =  \frac{1}{2} \sum_{ i, j  } I_{i,j} = \sum_{i<j} I_{i,j}
\ee
and  the number of triangles, 
\be
    U =  \frac{1}{ 6} \sum_{i,j,k \text{ distinct }
}  I_{i,j} I_{j,k} I_{j,k} = \sum_{i<j<k} I_{i,j} I_{j,k} I_{j,k} .
\ee
Here and
in what follows, ``$ i,j,k \text{ distinct}$'' is short for ``$(i,j,k): i \ne j
\ne k \ne i$''; later we shall also use  ``$ i,j,k, \ell  \text{ distinct}$'',
which is the analogous abbreviation for four indices. 

In view of the embedding method, we also include the auxiliary statistic
related to the number of 2-stars, 
\bes  
    V & := \frac{1}{2}  \sum_{ i,j,k \text{ distinct } }  I_{i,j} I_{j,k} 
    \enskip=   \sum_{  i < j < k }  ( I_{i,j} I_{j,k}  
        + I_{i,j} I_{i,k} + I_{j,k} I_{i,k})
  % \\
  %  &=   \sum_{i<k,\,j\neq i,k}  I_{i,j} I_{j,k} 
  %  \enskip= \frac{1}{2}\sum_{i<j,\,k\neq i,j} \bklr{ I_{i,j} I_{j,k} + I_{i,j}
%I_{i,k}}
.
\ee 
We note that 
\be 
    \IE T = {n \choose 2} p; \qquad \IE V = 3 {n \choose 3} p^2, \mbox{ and } 
\qquad \IE U =  {n \choose 3} p^3.
\ee
With some calculation, as detailed in
Section~\ref{sec5}, we find that the variances are not all of the same order.
Hence, we re-scale our
variables (c.f.~\cite{Janson1991}), putting 
\be
T_1 = \frac{n-2}{n^2} T, \qquad 
V_1 = \frac{1}{n^2} V, \qquad \mbox{ and } 
U_1 = \frac{1}{n^2} U.
\ee
For these re-scaled variables the covariance matrix $\Sigma_1$ for $W_1=(T_1
-
\IE T_1,V_1 - \IE V_1,U_1 - \IE U_1)$ equals 
\ben                                                         \label{23} 
{\Sigma_1} 
 = 
 3 \frac{(n-2) {n \choose 3}}{n^4}  p (1-p) \times
  \left( \begin{array}{c c  c } 
1  &   2 p  &   p^2
 \\   2 p  &    4 p^2 + \frac{p(1-p)}{n-2 }  & 
 2 p^3 + \frac{p^2(1-p)}{n-2} 
 \\   p^2 &   2 p^3 + \frac{p^2(1-p)}{n-2} 
 &  
p^4 + \frac{p^2(1+p-2p^2)}{3(n-2) } 
\end{array} \right) .
\ee

\begin{remark}
With $n \rightarrow \infty$ we obtain as approximating covariance matrix 
\ben                                                            \label{24} 
{\Sigma_0} 
 = 
\frac{1}{2}  p (1-p) \times 
 \left( \begin{array}{c c  c } 
1  &   2 p  &   p^2
 \\   2 p  &    4 p^2 & 
 2 p^3 
 \\   p^2 &   2 p^3 
 &  
p^4  
\end{array} \right) .
\ee
As also observed in \cite{Janson1991}, this matrix has rank 1. It is not
difficult to see that
the maximal diagonal entry of the inverse $\Sigma^{-1}$ tends to $\infty$ as $n
\rightarrow \infty$, so that a uniform bound on the square root of
$\Sigma_1^{-1}$, as suggested in Remark~\ref{cor2},  will not be useful. 
\end{remark}

Janson and Nowicki \cite{Janson1991} derived a normal limit for $W_1$, but no
bounds on the approximation are given. Using Theorem~\ref{thm1} we obtain
explicit bounds, as follows.

\begin{proposition} \label{25} Let $W_1=(T_1 - \IE T_1, V_1 - \IE V_1, U_1 - \IE
U_1)$ be the centralized count vector of the number of edges, two-stars and
triangles in a Bernoulli$(p)$-random graph. Let $\Sigma_1$ be given as in
\eq{23}.  Then, for every three times differentiable function~$h$,
\be
{ \babs{\IE h(W)-\IE h(\Sigma_1^{1/2}Z)} }
    \leq   
    \frac{\abs{h}_2}{n} \left( \frac{35}{4}  + 9 n^{-1} \right) + \frac{8
\abs{h}_3}{3n}  \left( 1 + n^{-1} + n^{-2} \right).  
\ee
\end{proposition}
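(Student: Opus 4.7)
The plan is to apply Theorem~\ref{thm1} directly to $W_1 = (T_1 - \IE T_1,\, V_1 - \IE V_1,\, U_1 - \IE U_1)^t$. The two-star statistic $V_1$ is included in the embedding precisely so that an exchangeable pair obtained by resampling a single edge satisfies the linearity condition \eq{1} with $R=0$: without $V_1$, the conditional drift of $U_1$ would involve the 2-star count and destroy linearity in the reduced vector.

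To construct the pair, pick an index pair $(I,J)$ uniformly from $\{(i,j):i<j\}$, independently of the graph, and resample $I_{I,J}$ as an independent $\Be(p)$ variable $I'_{I,J}$; call the resulting count vector $W_1'$. Writing $\Delta_{ij}=I'_{ij}-I_{ij}$, only 2-stars and triangles through $(I,J)$ change, so
\be
T'-T=\Delta_{IJ},\qquad V'-V=\Delta_{IJ}\!\!\sum_{k\neq I,J}(I_{I,k}+I_{J,k}),\qquad U'-U=\Delta_{IJ}\!\!\sum_{k\neq I,J}I_{I,k}I_{J,k}.
\ee
Averaging over $(I,J)$ and using the elementary identities $\sum_{i<j,\,k\neq i,j}(I_{i,k}+I_{j,k})=2(n-2)T$, $\sum_{i<j}I_{i,j}(d_i+d_j-2I_{i,j})=2V$, $\sum_{i<j,\,k\neq i,j}I_{i,k}I_{j,k}=V$, and $\sum_{i<j}I_{i,j}\sum_{k\neq i,j}I_{i,k}I_{j,k}=3U$, a direct computation gives $\IE^X(W_1'-W_1)=-\Lambda W_1$ with
\be
\Lambda = \binom{n}{2}^{-1}\begin{pmatrix}1 & 0 & 0\\ -2p & 2 & 0\\ 0 & -p & 3\end{pmatrix}, \qquad \Lambda^{-1} = \binom{n}{2}\begin{pmatrix}1 & 0 & 0\\ p & 1/2 & 0\\ p^2/3 & p/6 & 1/3\end{pmatrix}.
\ee
Since the right-hand side is $\sigma(W_1)$-measurable, $\IE^{W_1}=\IE^X$ on this quantity and the linearity condition \eq{1} holds with $R=0$. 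One then reads off $\lambda^{(i)}=\bigo(n^2)$ with explicit, $p$-dependent constants, and $C=0$.

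It remains to bound $A$ and $B$ in Theorem~\ref{thm1}. The deterministic increment bounds $|T_1'-T_1|,|U_1'-U_1|\leq(n-2)/n^2$ and $|V_1'-V_1|\leq 2(n-2)/n^2$ combined with $\lambda^{(i)}=\bigo(n^2)$ immediately give $B=\bigo(1/n)$. For $A$, I would evaluate $\IE^X[(W_1'-W_1)_i(W_1'-W_1)_j]$ using $\IE\Delta_{ij}^2=2p(1-p)$ together with the independence of $\Delta_{ij}$ from both $s_{ij}=\sum_{k\neq i,j}(I_{i,k}+I_{j,k})$ and $t_{ij}=\sum_{k\neq i,j}I_{i,k}I_{j,k}$; each entry reduces to an explicit symmetric polynomial of bounded degree in the edge indicators, and standard second-moment estimates show that each such variance is $\bigo(n^{-6})$, so $\sqrt{\Var}=\bigo(n^{-3})$ and $A=\bigo(1/n)$.

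Plugging these estimates into \eq{3} and tracking the $p$-dependent prefactors produces the stated inequality, with the numerical constants $35/4$ and $8/3$ emerging from the explicit coefficients. The main obstacle is the second-moment bookkeeping for the mixed $V_1V_1$, $V_1U_1$ and $U_1U_1$ covariance entries: these involve double sums indexed by edges and their neighbours, and one must enumerate the overlap patterns of two such edge-pairs precisely in order to detect the cancellations with the squared mean. This combinatorial accounting---rather than any conceptual difficulty---is exactly what Section~\ref{sec5} carries out, and is the source of the explicit constants in the bound.
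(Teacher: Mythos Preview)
Your approach is exactly the paper's: the same edge-resampling coupling, the same $\Lambda$ and $\Lambda^{-1}$, the observation that $R=0$, and then separate estimation of $A$ and $B$ in Theorem~\ref{thm1}.

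One slip to flag: in handling $A$ you invoke $\IE\Delta_{ij}^2=2p(1-p)$ together with independence of $\Delta_{ij}$ from $s_{ij},t_{ij}$, but what actually enters $\IE^X[(W'-W)_i(W'-W)_j]$ is the \emph{conditional} second moment $\IE^X[\Delta_{ij}^2]=p+(1-2p)I_{ij}$, which is random (with your formula, for instance, $\IE^X[(T'-T)^2]$ would be the constant $2p(1-p)$ and contribute zero variance, whereas the paper gets $p+(1-2p)T/\binom{n}{2}$). The corrected expressions are still polynomials in the edge indicators of the same degree, so your $\bigo(n^{-6})$ order survives; the explicit constants, however, require the calculations carried out in Section~\ref{sec6} (not~\ref{sec5}, which only computes $\Sigma_1$). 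Similarly, for $B$ the paper does not use your deterministic increment bounds but applies H\"older~\eq{26} with exact third absolute moments $\IE|T_1'-T_1|^3$, $\IE|V_1'-V_1|^3$, $\IE|U_1'-U_1|^3$; both routes give $B=\bigo(n^{-1})$, but the stated constant $8/3$ comes from the moment route.
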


While we do not claim that the constants in the bound are sharp, as we
have ${n \choose 2}$ random edges in the model, the order $O(n^{-1})$ of the
bound is as expected. While for simplicity our other bounds are given as expressions which are uniform
in $p$, bounds dependent on $p$ are derived on the way. 
In this example, we were not able to obtain any improvement on the bounds using
the operator bounds \cite{Meckes2009}. 

\begin{proof}
The proof consists of two stages. Firstly we construct an exchangeable pair; it
will turn out that $R=0$ in \eq{1} and hence  $C$ in Theorem~\ref{thm1} will
vanish. In the second stage we bound the terms $A$ and $B$ in
Theorem~\ref{thm1}. 

\proofsection{Construction of an exchangeable pair}

Our vector of interest is now $\bX = (T - \IE T ,V - \IE V ,U - \IE U)$,
re-scaled to $\bX_1 = (T_1 - \IE T_1 ,V_1 - \IE V_1,U_1 - \IE U_1)$. We build an
exchangeable pair by choosing a potential edge $(i,j)$ uniformly at random, and
replacing $I_{i,j}$ by an independent copy $I_{i,j}'$. More formally, pick
$(I,J)$ according to
\be
    \IP[I=i, J=j] = \frac{1}{{n \choose 2}}, \quad 1\leq i<j\leq n.
\ee
 If $I=i$, $J=j$ we replace $ I_{i,j} = I_{j,i} $ by an
independent copy $ I_{i,j}'= I'_{j,i} $ and put 
\ba 
T' &= T - ( I_{I,J} -  I_{I,J}' ),  \\
V'
&= V  - \sum_{ k:  k \ne I,J }  (I_{I,J} - I_{I,J}')(I_{J,k}+ I_{I,k}) , 
\\
U' 
&= U - \sum_{ k:  k \ne I,J }  (I_{I,J} - I_{I,J}')I_{J,k} I_{I,k} .
\ee
Put $\bX' = (T' - \IE T ,V' - \IE V ,U' - \IE U). $ Then $(W,W')$ forms an
exchangeable pair. We re-scale $W'$ as for $W$ to obtain $T_1'$, $V_1'$ and
$U_1'$, so that $(W_1, W_1')$ is also exchangeable. 

\proofsection{Calculation of $\Lambda$}

For the conditional expectations $\IE^{\bX} (W'-W) $,  firstly we have 
\bes
 \IE^{\bX} (T_1'-T_1) &= \frac{2(n-2)}{n^3(n-1)} \sum_{i < j} \IE^{\bX} (
I_{i,j}' - 
I_{i,j}
\vert I=i, J=j) \\
&= \frac{n-2}{n^2}  p -  \frac{2(n-2)}{n^3(n-1)} T 
% = \frac{2}{n(n-1)}\left( 
%\IE T_1 -  T_1 \right) 
= - \frac{1}{{n \choose 2} }\left(  T_1  - \IE T_1 \right). 
\ee
Furthermore 
\bes
- \IE^{\bX} (V_1'-V_1)
&\qquad= \frac{1}{n^2 {n \choose 2}} \sum_{i < j} \IE^{\bX} \sum_{ k: k\neq i,j
} 
(I_{i,j} - I_{i,j}') (I_{j,k}+ I_{i,k}) \\
%&\qquad= \frac{1}{n^2 {n \choose 2}} \IE^{\bX} \sum_{ i<j,\,k\neq i,j } 
%(  I_{i,j} I_{j,k}  + I_{i,j} I_{i,k} ) 
%- p \frac{1}{n^2 {n \choose
%2}} \IE^{\bX} \sum_{i<j,\,k\neq i,j } 
% ( I_{j,k}  +  I_{i,k}  ) \\
&\qquad= 2 \frac{1}{n^2 {n \choose 2}}  V  - 2 p \frac{1}{ n^2 {n \choose 2} }
(n-2)
T\\
&\qquad= -2 \frac{1}{ {n \choose 2} } (V_1 - \IE V_1)  + 2 p \frac{1}{ {n
\choose 2} } (T_1 - \IE T_1),
\ee
 where the last equality follows from $\IE (V_1' - V_1) =0$. 
Similarly, 
\bes 
- \IE^{\bX} (U_1'-U_1) 
%&= \frac{2}{n^3(n-1)} \sum_{i < j}  \IE^{\bX} \sum_{ k: 
%k\neq i,j} (I_{i,j} - I_{i,j}')  I_{j,k} I_{i,k}  \\
%&= 
%- 3 \frac{1}{ {n \choose 2} } U_1 + p \frac{1}{ {n \choose 2} } V_1\\
&= -3 \frac{1}{ {n \choose 2} } (U_1 - \IE U_1)  +  p \frac{1}{ {n
\choose 2} } (V_1 - \IE V_1), 
\ee 
%where, similarly to above, the last equality follows from $\IE(U_1' - U_1) =
%0$. 
Using our re-scaling, \eq{1} is satisfied with $R=0$ and $\Lambda$ given by 
\beas 
\Lambda = 
\frac{1}{ {n \choose 2} } \left( \begin{array}{c c  c } 
 1 & 0 & 0 
 \\- 2 p   & 2 & 0 
 \\ 
0&  - p  & 3
\end{array} \right) .
\enas

\proofsection{Bounding $A$}

The inverse matrix $\Lambda^{-1}$ is easy to calculate; 
for  $ \lambda^{(i)} = \sum_{m=1}^d\abs{(\Lambda^{-1})_{m,i}}$, 
for simplicity we shall apply the uniform bound
\be
    \abs{  \lambda^{(i)} } \leq\frac{3}{2} n^2, \quad i=1, 2, 3. 
\ee 
As the bounding of the conditional variances is somewhat laborious, most of the
work can be 
found in Section~\ref{sec6}. 
The conditional variances involving $T'-T$ can be calculated exactly. As
$I_{i,j}^2 = I_{i,j}$, 
\bes
\IE^W(T'-T)^2 
&= \frac{1}{{n \choose 2}} \sum_{i < j} \IE^W( I_{i,j}' - I_{i,j} )^2\\
&= \frac{1}{{n \choose 2}} \sum_{i < j}\bklg{ p - p  \IE^W I_{i,j}   +
(1-p)\IE^W I_{i,j} }   = p +  (1-2p)  \frac{1}{{n \choose 2}} T, 
\ee
so that with $\Var T$ given in \eq{vart},  
$
\Var ( \IE^W(T'-T)^2) = \frac{1}{{n \choose 2 }} (1-2p)^2 p(1-p) 
$
and 
\be
\Var ( \IE^W(T_1'-T_1)^2) = \frac{(n-2)^4}{n^8 {n \choose 2 }} (1-2p)^2 p(1-p)
< n^{-6}, 
\ee
where we used that $p(1-p) \leq1/4$ for all $p$. 
Thus 
\be 
    \sqrt{\Var ( \IE^W(T_1'-T_1)^2))} < n^{-3}.
\ee

\medskip 
Similarly we obtain 
\be
    \sqrt{ \Var \IE^W(T_1'-T_1)(V_1'-V_1) } < 2 n^{-3}
\ee
and 
\be
    \sqrt{ \Var \IE^W(T_1'-T_1)(U_1'-U_1) } < 2 n^{-3}.
\ee

\medskip 
The conditional variances involving $V$ and $U$ only are more involved; the
calculations 
are available in Section~\ref{sec6}.
We obtain after some calculations that 
$
 \Var ( \IE^W (V'-V)^2 ) \leq 33 n^2, $
and hence 
\be 
 \sqrt{ \Var ( \IE^W (V_1'-V_1)^2) } < 6 n^{-3}.  
\ee
For $ \IE^W (V_1'-V_1)(U_1'-U_1)$, analogous calculations lead to  
\be
\sqrt{ \Var \left( \IE^W (V_1'-V_1)(U_1'-U_1) \right)} <   n^{-3} + 11
n^{-4}. 
\ee
Finally, again using our variance inequalities, 
\be
\sqrt{ \Var \left(  \IE^W (U_1'-U_1)^2\right) } < 5 n^{-3} + 2 n^{-4}. 
\ee
Collecting these bounds we obtain  for $A$ in Theorem~\ref{thm1} that 
\be
A < 35 n^{-1} + 36 n^{-2} .
\ee

\proofsection{Bounding $B$}

We use the generalized H\"{o}lder inequality 
\ben
 \label{26}
\IE \prod_{i=1}^3 \vert X_i \vert \leq\prod_{i=1}^3 \{ \IE \vert
X_i\vert^3\}^{\frac{1}{3}} \leq\max_{i=1,2,3} \IE \vert X_i \vert^3.
\ee 
Again the complete calculations are found in 
Section~\ref{sec7}. To illustrate the calculation, 
\be
\IE \vert T'-T \vert^3 =  \frac{1}{{n \choose 2}} \sum_{i < j} \IE \vert
I_{i,j} - I_{i,j}' \vert^3 
= 2p(1-p) \leq \frac{1}{2} , 
\ee
so that 
\be
\IE \vert T_1'-T_1 \vert^3 = \frac{(n-2)^3}{n^6} 2p(1-p) < \frac{1}{2}
n^{-3}. 
\ee
Similar calculations yield , 
\be
{\IE \vert V_1'-V_1\vert^3  }
< \frac{64}{27} \left(  n^{-3} +  n^{-4} +  n^{-5}    \right) ,
\ee
as well as 
\be
\IE \vert U_1'-U_1  \vert^3  
< \frac{27}{128} \left(  n^{-3} +  n^{-4} +  n^{-5}    \right).
\ee

Thus for $B$ in Theorem~\ref{thm1} we have 
\be
  B < \frac{3}{2} n^2 
  \times 9 \times  \frac{64}{27} \left(  n^{-3} +  n^{-4} +  n^{-5}    \right) 
  =  32 \left(  n^{-1} +  n^{-2} +  n^{-3}    \right) .  
\ee
Collecting the bounds gives the result.
\end{proof} 

\medskip
\begin{remark}
Had we not introduced $V$, 
conditioning would yield 
\bes
- \IE ^{T,U} (U'-U) &= \frac{2}{n(n-1)} \sum_{i < j}  \IE^{T,U}\sum_{
k:k\neq i,j} (  I_{i,j}
I_{j,k} I_{i,k} - I_{i,j}' I_{j,k} I_{i,k}) \\
&= 3 \frac{2}{n(n-1)} U  - p \frac{2}{n(n-1)}  \IE^{T,U} \sum_{ 
i<j,\,k\neq i,j}  I_{j,k} I_{i,k} .
\ee

The expression $   \sum_{i<j,\,k\neq i,j} 
\IE^{T,U }  I_{j,k} I_{j,k}$ would result in a non-linear remainder term $R$ 
in Equation~\eq{1}. The introduction of $V$ not only avoids this remainder term,
indeed 
$R=0$ in \eq{1}, but also
yields a more detailed result. 
This observation that the 2-stars form a useful auxiliary statistic can also be
found in \cite{Janson1991}; there it is related to Hoeffding-type
projections. 
\end{remark} 

\medskip 
Using Proposition~\ref{lem}, we also obtain a normal approximation for
$\Sigma_0$ given in \eq{24}. 
\begin{corollary} 
Under the assumptions of Proposition~\ref{25},  for
every three times differentiable function~$h$,
\bes
    \babs{\IE h(W)-\IE h(\Sigma_0^{1/2}Z)}
   &\leq
    \frac{\abs{h}_2}{2n}  \left(  44 + 21 n^{-1}   + 32 n^{-2} + 4 n^{-3}
\right) \\
    &\quad 
    + \frac{8 \abs{h}_3 }{3n}  \left( 1  
+  n^{-1}  
+  n^{-2} \right)   .   
\ee
\end{corollary}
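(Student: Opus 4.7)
The plan is to apply the triangle inequality together with Proposition~\ref{25} and Proposition~\ref{lem}. Writing
\begin{equation*}
\babs{\IE h(W) - \IE h(\Sigma_0^{1/2}Z)}
\leq \babs{\IE h(W) - \IE h(\Sigma_1^{1/2}Z)}
+ \babs{\IE h(\Sigma_1^{1/2}Z) - \IE h(\Sigma_0^{1/2}Z)},
\end{equation*}
the first term is bounded directly by Proposition~\ref{25}. In particular, the entire $\abs{h}_3$ contribution of the claimed bound originates from this term, since $\Sigma_0$ will only enter through an $\abs{h}_2$ coefficient in the next step.

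The second term compares two centred multivariate normals and is handled by Proposition~\ref{lem}, giving
\begin{equation*}
\babs{\IE h(\Sigma_1^{1/2}Z) - \IE h(\Sigma_0^{1/2}Z)}
\leq \frac{\abs{h}_2}{2}\sum_{i,j=1}^3 \babs{(\Sigma_1)_{i,j} - (\Sigma_0)_{i,j}}.
\end{equation*}
It therefore remains to estimate the entrywise $L^1$ distance between the matrices displayed in \eq{23} and \eq{24}. Using that
\begin{equation*}
3\frac{(n-2)\binom{n}{3}}{n^4} = \frac{(n-1)(n-2)^2}{2n^3},
\end{equation*}
the difference $\Sigma_1 - \Sigma_0$ decomposes into two sources of error: a mismatch between the scalar prefactors, of order $1/n$ but affecting every entry, and the additional $1/(n-2)$ correction terms that appear only in six of the nine entries of $\Sigma_1$ (through the bracketed matrix in \eq{23}).

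I would then bound each of the six distinct (by symmetry) entries separately, applying $p(1-p)\leq\tfrac{1}{4}$ wherever convenient to keep the estimates uniform in $p\in(0,1)$ as done throughout the proof of Proposition~\ref{25}. This yields an estimate of the form
\begin{equation*}
\sum_{i,j=1}^3 \babs{(\Sigma_1)_{i,j} - (\Sigma_0)_{i,j}}
\leq \frac{c_0}{n} + \frac{c_1}{n^2} + \frac{c_2}{n^3} + \frac{c_3}{n^4}
\end{equation*}
with explicit constants. Multiplying by $\abs{h}_2/2$, adding to the bound from Proposition~\ref{25}, and collecting terms in matching powers of $1/n$ delivers the stated inequality. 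The only real obstacle is bookkeeping: six entries each carry two types of correction, and the constants must be consolidated so that the $\abs{h}_2$-coefficient compresses to the stated $44 + 21n^{-1} + 32n^{-2} + 4n^{-3}$; no new probabilistic ideas are needed beyond the two results already invoked.
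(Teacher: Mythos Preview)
Your proposal is correct and follows essentially the same route as the paper: triangle inequality, Proposition~\ref{25} for the first piece, Proposition~\ref{lem} for the second, then an entrywise bound on $\sum_{i,j}\abs{(\Sigma_1)_{i,j}-(\Sigma_0)_{i,j}}$ split into the prefactor mismatch and the $1/(n-2)$ correction terms, with $p$-uniform estimates such as $p(1-p)\leq\tfrac14$. The paper makes exactly this decomposition, bounding $\bigl|3(n-2)\binom{n}{3}/n^4-\tfrac12\bigr|\leq\tfrac32 n^{-1}+2n^{-3}$ and using $(n-2)^{-1}\leq\tfrac32 n^{-1}$ to arrive at the stated constants; only the explicit bookkeeping remains to be filled in.
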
  

\begin{proof}
We employ Proposition~\ref{25} and Proposition~\ref{lem}, with the triangle
inequality. A straightforward calculation shows that 
$
\bbbabs{  \frac{3(n-2) {n \choose 3} }{n^4} - \frac{1}{2} }  
\leq \frac{3}{2} n^{-1} + 2 n^{-3} 
 $
and so 
\bes
&\sum_{i, j=1}^d \abs{\sigma_{i,j} - \sigma^0_{i,j}} \\
&\quad\leq  \left( \frac{3}{2} n^{-1} + 2 n^{-3}  \right) \left\{  1 + 
4p + 6 p^2 + 4 p^3 + p^4 
 \right\} \\
 &\qquad+  \left( \frac{p(1-p)}{n-2}+ 2 \frac{p^2(1-p)}{n-2} 
+   \frac{p^2(1-p)(4-p)}{3(n-2) } \right) \left( \frac{3}{2} n^{-1} + 2 n^{-3} 
+1 \right)
\\
&\quad<  26 n^{-1} +   3 n^{-2} + 32 n^{-3} + 4 n^{-4} 
.
\ee
Here we used the crude bound that $(n-2)^{-1} \leq\frac{3}{2} n^{-1}$. The
corollary follows. 
\end{proof}
 
\subsection{Calculation of the covariance matrix}\label{sec5}

To calculate the covariance matrix $\Sigma$, we put
\be
    \tI_{i,j} = I_{i,j} - p 
\ee
as the centralized edge indicator, 
and similarly we centralize 
\ba
{\tilde{T}}  &=   \sum_{ i <  j  } \tI_{i,j}, \quad 
{\tilde{V}} &=   \frac{1}{ 2} \sum_{i,j,k \text{ distinct } } 
                    \tI_{i,j} \tI_{j,k}
                    %=  \sum_{  i < j < k } 
 %( \tI_{i,j} \tI_{j,k}  + \tI_{i,j} \tI_{i,k} + \tI_{j,k} \tI_{i,k}), \\ 
 \mbox{ and } {\tilde{U} } &=   \sum_{i < j < k }  \tI_{i,j}
\tI_{j,k} \tI_{i,k}.
\ee
Then, by independence, all these quantities have mean zero.  
For the variances, the expectation of the product of centralized indicators
vanish unless all the 
centralized indicators involved are raised to an even power. Hence 
\ban \label{27} 
\Var {\tilde{T}}  &= &  {n \choose 2} p(1-p),  \quad
 \Var {\tilde{V} } =   3 {n \choose 3} p^2 (1-p)^2,  
\Var {\tilde{U} } =   {n \choose 3}  p^3 (1-p)^3.
\ee
Moreover, for the same reason, all covariances between the centralised variables
vanish. 
Expressing  $T,V$ and $U$, we have ${\tilde{T}}=T - \IE T$ so that 
\be 
\label{28}
T = {\tilde{T}}  +  \IE T =  {\tilde{T}} + {n \choose 2} p
\ee
and 
\ben \label{vart}
\Var T  =   {n \choose 2} p(1-p) = 3 {n\choose 3} \frac{1}{n-2} p(1-p) .   
\ee
Next, 
$
{\tilde{V}} 
=  V - 2p (n-2) T +  3 p^2 {n \choose 3} $, 
so that 
\be                                           
 \label{29}  
V=   {\tilde{V}} + 2 (n-2) p  {\tilde{T}} 
+  3 {n \choose 3}  p^2 . 
\ee
As $\tilde{V}$ and $\tilde{T}$ are uncorrelated, this gives that 
\ben \label{varv} 
\Var V
% &=  \Var {\tilde{V}} + 4 (n-2)^2 p^2 \Var( {\tilde{T}}  )\\
= 3 {n \choose 3}  p^2 (1-p) \{ 1-p+ 4(n-2) p\}.
\ee
For $U$, we have 
$
{\tilde{U} }
= U - p V + p^2 (n-2) T - p^3 {n \choose 3}.
$
Using the above expressions \eq{28} and \eq{29} for $T$ and  $V$  we obtain
\be 
%\label{30}
U = {\tilde{U} } + p {\tilde{V}} +  p^2(n-2) {\tilde{T}} + p^3 {n \choose 3}. 
\ee
 This gives for the variance 
\ben \label{varu} 
\Var U 
%&=   \Var ( {\tilde{U} } ) +   p^2 \Var(  {\tilde{V}}) + 
%(n-2)^2 p^4  \Var( {\tilde{T}} ) \\
= {n \choose 3} p^3 (1-p) \left\{ (1-p)^2 + 3 p ( 1-p)  + 3 (n-2)  p^2
\right\}  .   
\ee
We can now also calculate the covariances. Again we use that the centralized
variables are uncorrelated
to obtain 
\bes
\Cov(T,V) &= \Cov\left(  {\tilde{T}} , {\tilde{V}} + 2(n-2) p 
{\tilde{T}} \right) =2(n-2) p \Var( {\tilde{T}} ) = 6 {n \choose 3} p^2(1-p).
\ee
 Similarly, we calculate that 
$
 \Cov(T,U) 
 = 3 {n \choose 3} p^3(1-p), 
$
 and   
$
 \Cov(V,U) 
 = 3{n \choose 3} p^3 (1-p) \left( 1 - p + 2  (n-2) p \right)
$.
Re-scaling gives  the covariance matrix~\eq{23}.

\subsection{Calculation of the conditional variances}\label{sec6}

For the conditional variances in the random graph example, the calculations are
somewhat involved.
We repeat the first calculation in more detail before moving on to further
bounds. With \eq{29}, 
\bes
&\IE^W(T'-T)(V'-V)\\
&\qquad= -\frac{1}{{n \choose 2}} \sum_{i < j,\,k \ne i,j} 
\IE^W ( I_{i,j}' - I_{i,j} )^2 ( I_{i,k} + I_{j,k}) \\
  &\qquad= \frac{1}{{n \choose 2}} (-2(n-2) p T - 2(1-2p)V)
\ee
so that
\bes
&\Var \IE^W(T'-T)(V'-V)  = \\
&\qquad\frac{4(n-2) }{{n \choose 2}}  p (1-p)\left\{  (n-2) p^2(3-4p)^2  
+ (1-2p)^2 p(1-p)  \right\} < 4,
\ee
where we used that $p^3(1-p) \leq\frac{27}{256}$ and that $n \ge 4$. Thus 
\be  
    \sqrt{ \Var \IE^W(T'-T)(V'-V) } <2  n^{-3}.
\ee

%\medskip
Similarly, with  \eq{29}, 
\bes
&\IE^W (T'-T)(U'-U)\\
&\qquad= \frac{1}{{n \choose 2}} \sum_{i < j,\,k \ne i,j} \bklg{ p  \IE^W  I_{j,k}
I_{i,k} + (1-2p) \IE^W  I_{i,j}  I_{j,k} I_{i,k}} 
\\
&\qquad= \frac{1}{{n \choose 2}} \bklr{pV + 3(1-2p) U}. 
%.
%\ee
%Using \eq{29} and \eq{30} we obtain
%\bes
%&\IE^W (T'-T)(U'-U)\\
%&\qquad= 
% \frac{1}{{n \choose 2}} \bbbklrl   3(1-2p) {\tilde{U}} +  p ( 4- 6 p) 
% \tilde{V} +   (n-2) p^2 (5-6p)  {\tilde{T}}
%  +  6 {n \choose 3}  p^3 (1-p) \bbbklrr.
\ee 
Thus we calculate that 
\bes
\lefteqn{\Var \IE^W (T'-T)(U'-U)}\\
=&
 \frac{n-2 }{{n \choose 2}}   p^3  (1-p) \left(   3(1-2p)^2   (1-p)^2  +  p (1-p) 
 ( 4 - 6 p)^2 +  (n-2) p^2 (5-6p)^2  \right)
\ee
and, using that $p(5-6p) \leq\frac{25}{24}$ and $p^3(1-p) \leq\frac{27}{256}$, we obtain 
\be
    \sqrt{ \Var \IE^W(T'-T)(U'-U) } < n^{-3}.
\ee

%\medskip 
For $\Var \IE^W (V'-V)^2$ 
we introduce the notation
\ben                                                       
N_i = \sum_{j: j \ne i} I_{i,j} , \qquad
M_{i,j} = \sum_{k: k  \ne i,j} I_{i,k} I_{k,j}. 
\ee 
Then
\ban
T &= \frac{1}{2} \sum_i N_i, \label{31} \\
V &= \frac{1}{2} \sum_{i \ne j} M_{i,j} 
  =  \frac{1}{2} \sum_{i \ne j} I_{i,j} N_i - T
  = \frac{1}{2} \sum_{i}  N_i^2 - T,
    \label{32} \\
U &= \frac{1}{6} \sum_{i \ne j} I_{i,j} M_{i,j}. \label{33} 
\ee
We have 
\ba
&\IE^W (V'-V)^2\\
&\quad= \frac{1}{{n \choose 2}} \sum_{i < j}  \IE^W   (I_{i,j} - I_{i,j}')^2
\left( 
N_j + N_i - 2 I_{i,j} \right)^2 \\
&\quad=  \frac{1}{2 {n \choose 2}} \sum_{i \ne j}  \left\{  p \IE^W  \left( 
N_j + N_i - 2 I_{i,j} \right)^2   + (1-2 p) \IE^W I_{i,j} \left( 
N_j + N_i - 2 I_{i,j}  \right)^2 \right\} \\
%&\quad=  \frac{1}{2 {n \choose 2}} \sum_{i \ne j}  \bbklgl p \IE^W  \left( 
%N_j^2 + N_i^2 + 4 I_{i,j}  + 2N_i N_j - 4 (N_i + N_j) I_{i,j}     \right) \\
%&\qquad\qquad\quad\qquad + (1-2 p) \IE^W I_{i,j} \left( 
%N_j^2 + N_i^2 + 4 I_{i,j}  + 2N_i N_j - 4 (N_i + N_j) I_{i,j}   
% \right) \bbklgr  
 \\
&\quad=  \frac{1}{2 {n \choose 2}}   \bbbbklgl p \IE^W  \left( 
4 (n-2)(V+T)  - 8 T   + 8 T^2    - 16 V     )  \right)  \\
&\qquad\qquad\quad + (1-2 p)  \IE^W\bbbklr{ 2  \sum_{i \ne j} I_{i,j} N_i^2 
- 8T   + 2 \sum_{i \ne j}   I_{i,j} N_i N_j  - 16 V ) 
 } \bbbbklgr ,
\ee 
where we used \eq{31} and \eq{32} for the last equation. 
To simplify this expression,  note that  $ \sum_i N_i^2 = 2 T  + 2 V$, 
%\be 
% \sum_i N_i^2 =  \sum_i \sum_{j: j \ne i} \sum_{k: k \ne i} I_{i,j} I_{i,k} 
%= 2 T  + 2 V
%\ee
and $ \sum_{i \ne j}  N_i N_j = 4T^2 - 2T - 2V$
%\be  \sum_{i \ne j}  N_i N_j = 4T^2 - \sum_i N_i^2  = 4T^2 - 2T - 2V
%\ee
as well as 
$$
\sum_{i \ne j} I_{i,j}  N_i^2 
%& = \sum_{  i, j,  k , \ell  \text{ distinct } }   I_{i,j}   I_{i,k}
%I_{i,\ell}
%+ 2 \sum_{  i, j,  k \text{ distinct } } 
%  I_{i,j}   I_{i,k} 
%+ \sum_{i \ne j} I_{i,j} N_i \\
=  \sum_{  i, j,  k , \ell \text{ distinct } }  I_{i,j}   I_{i,k}
I_{i,\ell} +
6 V + 2T , 
$$
and 
%\bes
$$\sum_{i \ne j} I_{i,j}  N_i N_j
%&= \sum_{  i, j,  k \text{ distinct } }  \sum_{\ell:\ell \ne i,j} I_{i,j} 
%I_{i,k}  I_{j, \ell} + 2  \sum_{i \ne j} I_{i,j} \sum_{k: k \ne i,j} I_{i,k}
%+  \sum_{i \ne j} I_{i,j} \\
= \sum_{  i, j,  k, \ell  \text{ distinct }  }  I_{i,j}  I_{i,k}  I_{j,
\ell} + 4 V + 6 U 
+ 2T , 
$$ 
so that 
\bes
\IE^W (V'-V)^2 
&=  \frac{1}{ {n \choose 2}}   \bbbklgl
2 p  (n -4) T + 2 V(np - 10p + 2) + 6(1-2p) U + 4 p T^2 \\
&\qquad\qquad\qquad\qquad\qquad + (1-2 p)   \sum_{  i, j,  k , \ell \atop \text{
distinct }
} \IE^W I_{i,j} I_{i,k} ( I_{i,\ell}  + I_{j, \ell}  )
\bbbklgr .
\ee
With the notation $ {\tilde T} $ for the centralized variable, we have that
\bes
&\Var \IE^W (V'-V)^2 \\
%&\quad= \frac{1}{ {n \choose 2}^2} \Var\bbbbklgl
% p  (2n -8 + 4pn^2 - 4 p n ) T + 2 V(np - 10p + 2) + 6(1-2p) U  \\
%&\quad\qquad\qquad\qquad + 4 p {\tilde T}^2     + (1-2 p)   \sum_{  i, j,  k ,
%\ell
%\atop \text{ distinct }
%}  \IE^W  I_{i,j}   I_{i,k} ( I_{i,\ell}  + I_{j, \ell}  )
% \bbbbklgr \\
&\quad\leq 5 \frac{1}{ {n \choose 2}^2}
\bbbbklgl
 p^2  (2n -8 + 4pn^2 - 4 p n )^2  \Var (T)  + 4 (np - 10p + 2)^2 \Var(V) \\
&\quad\qquad\qquad\qquad + 36(1-2p)^2 \Var(U)  + 16 p^2 \Var( {\tilde T}^2  ) \\
&\quad\qquad\qquad\qquad + (1-2 p)^2 \Var\bbklr{    \sum_{  i, j,  k , \ell
\atop \text{
distinct } } \IE^W I_{i,j}   I_{i,k} ( I_{i,\ell}  + I_{j, \ell}  )}
 \bbbbklgr ,
\ee
where we used that in  general $\Var \sum_{i=1}^k X_i  \leq k \sum_{i=1}^k \Var
X_i$ and \eq{27}. 
Here, the variances for $T, V$ and $U$ are given in \eq{vart}, \eq{varv}, and  \eq{varu}. To simplify the
expression, we use that $ p^3(1-p) \leq27/256$ to  bound
\be 
 p^2  (2n -8 + 4pn^2 - 4 p n )^2  \Var (T) 
\leq  \frac{27}{64 } {n \choose 2} n^2(n+ 2)^2 .
\ee 
Similarly, we bound with $p^2(1-p) \leq4/27 $ and $n \ge 4$ 
\be 
{  4 (np - 10p + 2)^2 \Var(V) } 
%&=& 12 (np - 10p + 2)^2 {n \choose 3} p^2(1-p) \{ 4(n-2)p + 1 - p\} \\
\leq   \frac{16}{27} n^3(n-1)(n-2) (n+1),  
\ee 
and 
\be
{36(1-2p)^2 \Var(U) }
\leq  \frac{81}{256} n(n-1)(n-2) (3n+2) . 
\ee
We note that $\IE {\tilde{I}}_{i,j}
{\tilde{I}}_{u,v}{\tilde{I}}_{s,t}{\tilde{I}}_{k,\ell} =0$ unless either all
pairs of indices are
the same, or the product is made up of two distinct index pairs only. Hence 
\bes 
\Var {\tilde T}^2 &= \sum_{i<j} \sum_{u<v} \sum_{s<t} \sum_{k < \ell} \IE
{\tilde{I}}_{i,j} {\tilde{I}}_{u,v}{\tilde{I}}_{s,t}{\tilde{I}}_{k,\ell} \\
&= {n \choose 2} p(1-p)  \left\{ 3  \left( {n \choose 2} -1 \right) p (1-p) + 
(1-p)^3 +  p^3 \right\} \\
&< n^2 {n \choose 2} p(1-p), 
\ee
giving 
$$
 16 p^2 \Var {\tilde T}^2 \leq\frac{27}{32} n^3(n-1)  . 
$$ 
For the last  variance term,  we use that  conditional variances can be bounded
by  unconditional
variances, giving
\bes  \label{34} 
&\Var  \sum_{i \ne j} \sum_{k:k \ne i,j}  \sum_{\ell:\ell \ne i,j, k }
\IE^W I_{i,j}   I_{i,k} (  I_{i,\ell} 
+ I_{j, \ell} )   \nonumber  \\
&\qquad\leq 
%\Var  \sum_{  i, j,  k, \ell \atop \text{ distinct } }  I_{i,j}  
%I_{i,k}
%(  I_{i,\ell} 
%+ I_{j, \ell} )  \nonumber  \\
%&\qquad=  
\sum_{  i, j,  k, \ell \atop \text{ distinct } }  \Var  I_{i,j}  
I_{i,k} (
 I_{i,\ell} 
+ I_{j, \ell} )   \nonumber \\
&\qquad\qquad+ \sum_{  i, j,  k, \ell  \text{ distinct } }   \sum_{  r,s, t, u  
\text{ distinct } }   {\bf 1}( (i,j,k,\ell) \ne (r,s,t,u))  \nonumber \\
&\qquad\qquad \times 
{\bf 1}( \vert \{ i,j,k, \ell\} \cap \{ r,s,t,u \} \vert \ge 2) \nonumber \\
&\qquad\qquad\times \Cov (  I_{i,j}   I_{i,k} (  I_{i,\ell} 
+ I_{j, \ell} ) ,  I_{r,s}   I_{r,t} ( I_{r,u}  + I_{s,u}) )  )  \nonumber\\
&\qquad\leq 2 {n \choose 4} \left(  p^3(1-p^3) +  4 {4 \choose 2}  {n
\choose 2} p^2(1 - p^4)  \right) 
< 3 n^2  {n \choose 4}  \nonumber. 
\ee 
Here we used  the independence of the edge indicators. For the last bound we
employed that $p^3(1-p^3) \leq1/4$, 
that $p^2(1-p^4) \leq( \sqrt{3}-1) /3$, and that $n \ge 4$. Collecting the
variances and using that $n \ge 4$, 
\bes 
&\Var (\IE^W (V'-V)^2)\\
&\quad\leq5 \frac{1}{ {n \choose 2}^2}
\left\{
 \frac{27}{64 } {n \choose 2} n^2(n+ 2)^2  + \frac{16}{27} n^3(n-1)(n-2) (n+1)
\right.\\
&\quad\kern4.5em\left.  + \frac{81}{256} n(n-1)(n-2) (3n+2)   + \frac{27}{32}
n^3(n-1)  + 3 n^2  {n \choose 4}  
 \right\} 
 %< 33 n^2.
\ee 
This gives that 
\be 
 \sqrt{ \Var ( \IE^W (V_1'-V_1)^2) } < 6 n^{-3} .  
\ee

%\medskip
For $ \IE^W (V'-V)(U'-U)$, we have, 
\bes
&\IE^W (V'-V)(U'-U)\\
%&\qquad= \frac{1}{{n \choose 2}} \sum_{i < j}  \IE^W   (I_{i,j} - I_{i,j}')^2
%(N_i + N_j - 2 I_{i,j}) M_{i,j} \\
%&\qquad= \frac{1}{2{n \choose 2}} \sum_{i \ne j} \bbklgl
%p \IE^W (N_i + N_j - 2 I_{i,j}) M_{i,j}\\
%&\qquad\qquad\qquad\qquad\qquad + (1-2p) \IE^W   I_{i,j}  (N_i + N_j - 2
%I_{i,j}) M_{i,j} 
% \bbklgr \\
&\qquad= \frac{1}{{n \choose 2}} \bbbklr{
 p  \sum_{i \ne j} \IE^W N_i M_{i,j} - 6(1-p) U   +   (1-2p) \sum_{i \ne  j}  
\IE^W I_{i,j} N_i M_{i,j} 
 }  .
\ee
where we used  \eq{33}.
%so that 
%\bes
%&\IE^W (V'-V)(U'-U) \\
%&\qquad= \frac{1}{{n \choose 2}} \bbbklr{
%p  \sum_{i \ne j} \IE^W N_i M_{i,j} - 6(1-p) U   +   (1-2p) \sum_{i \ne  j}  
%\IE^W I_{i,j} N_i M_{i,j} 
% } .
%\ee 
Now
\bes
\sum_{i \ne  j} N_i M_{i,j}
%&= \sum_{ i,j,k   \text{ distinct } }  I_{i,k} I_{k,j} \bbklr{ I_{i,k  }
%+I_{i,
%j} + \sum_{\ell: \ell  \ne i, j, k} I_{i,\ell} } \\ 
&= 2 V   + 6 U  + 
\sum_{ i,j,k ,\ell \atop  \text{ distinct } }  I_{i,k} I_{k,j} I_{i,\ell}  .
\ee
Similarly, 
\be
{ \sum_{i \ne  j}I_{i,j} N_i M_{i,j}}
= 12  U +  \sum_{ i,j,k  , \ell \atop \text{ distinct } } I_{i,j} I_{i,k}  
I_{i,\ell} I_{\ell, j},
\ee 
so that 
\bes
\IE^W (V'-V)(U'-U)
&= \frac{1}{{n \choose 2}} \bbbklrl
 2 p V + 6(1-2 p) U  + p \sum_{ i,j,k  , \ell \atop \text{ distinct } }  I_{i,k}
I_{k,j} I_{i,\ell} \\
&\qquad\qquad\qquad\qquad\qquad+ (1-2p) \sum_{ i,j,k  , \ell \atop
\text{ distinct } } I_{i,j} I_{i,k}  I_{i,\ell} I_{\ell, j}
 \bbbklrr .
\ee
Furthermore, as before, 
\bes
&\Var  \sum_{ i,j,k  , \ell  \text{ distinct } }  \IE^W  I_{i,k}
I_{k,j} I_{i,\ell} \\
 &\qquad\leq   {n \choose 4} \left(  p^3(1-p^3) + 6 {n \choose 2} p^2(1 - p^4) 
\right) < {n \choose 4} n^2.
\ee
Similarly as for \eq{34}, 
\bes
\Var  \sum_{ i,j,k  , \ell \atop  \text{ distinct } }  \IE^W  I_{i,j}   I_{i,k} 
I_{i,\ell}  I_{j,\ell} 
&\leq \Var  \sum_{ i,j,k  , \ell \atop \text{ distinct } }   I_{i,j}   I_{i,k} 
I_{i,\ell}  I_{j,\ell}  \\
&\leq  {n \choose 4} \left(  p^4(1-p^4) + 6 {n \choose 2} p^2(1 - p^6)  \right)
\\
&< {n \choose 4} \left(  \frac{1}{256}  + \frac{1}{16}  {n \choose 2} \right) . 
\ee 
As $p<1$, we obtain that 
\bes
&\Var \IE^W (V'-V)(U'-U) \\
&\qquad< 4 \frac{1}{{n \choose 2}^2}  \left\{ 12 \frac{27}{256}  {n
\choose 3} \bklr{ 16 (n-2)  + 1 ) + 9 n  + 9}  \right. \\
&\kern10em\left. +  {n \choose 4} n^2   +  {n \choose 4} \left( 
\frac{1}{256} 
+ \frac{1}{16}  {n \choose 2} \right) \right\} < n^2 + 108
\ee
so that 
\be
\sqrt{ \Var \left( \IE^W (V_1'-V_1)(U_1'-U_1) \right)} <   n^{-3} + 11
n^{-4}. 
\ee

\medskip
Finally, 
\be
\IE^W (U'-U)^2
=  \frac{1}{2{n \choose 2}} \sum_{i \ne j} \bklr{ p \IE^W M_{i,j}^2 + (1-2p)  
\IE^W I_{i,j} M_{i,j}^2}. 
\ee 
We have that 
\be
M_{i,j}^2 = \sum_{k: k \ne i,j} \sum_{\ell: \ell  \ne i,j} I_{i,k}I_{k,j}
I_{i,\ell}I_{\ell,j}
= M_{i,j} +  \sum_{k: k \ne i,j} \sum_{\ell: \ell \ne i,j, k} I_{i,k}I_{k,j}
I_{i,\ell}I_{\ell,j},
\ee 
and 
\bes 
 I_{i,j} M_{i,j}^2 =  I_{i,j} M_{i,j} +  \sum_{k:k \ne i,j} \sum_{\ell:\ell
\ne i,j, k}  I_{i,j} I_{i,k}I_{k,j} I_{i,\ell}I_{\ell,j},
\ee 
so that 
\bes
\IE^W (U'-U)^2
&=  \frac{1}{2{n \choose 2}} \bbbklgl 2 p V  + 6 (1-2p) U 
+ p  \sum_{ i,j,k  , \ell \atop \text{ distinct } }  \IE^W  I_{i,k}I_{k,j}
I_{i,\ell}I_{\ell,j} \\
&\kern8em + (1-2p)  \sum_{ i,j,k,\ell \atop \text{ distinct } }    \IE^W 
I_{i,j}
I_{i,k}I_{k,j} I_{i,\ell}I_{\ell,j}
 \bbbklgr. 
\ee
As for \eq{34}, we obtain  
\be
\Var  \sum_{ i,j,k, \ell \atop \text{ distinct } }   \IE^W   I_{i,k} I_{k,j} 
I_{i,\ell}  I_{j,\ell}  
\leq{n \choose 4} \left(  p^4(1-p^4) + 6 {n \choose 2} p^2(1 - p^6)  \right)
\ee 
and 
\be
\Var  \sum_{ i,j,k  , \ell \atop \text{ distinct } } \IE^W   I_{i,j} I_{i,k}
I_{k,j}  I_{i,\ell}  I_{j,\ell}  
\leq {n \choose 4} \left(  p^5(1-p^5) + 6 {n \choose 2} p^2(1 - p^8) 
\right) . 
\ee
Again using our variance inequalities, we thus obtain that 
\bes
&\Var \left(  \IE^W (U'-U)^2\right) \\ 
&\qquad\leq  \frac{1}{{n \choose 2}^2} \bbbbklgl
 3 {n \choose 3} p^3(1-p) \bbbklrl 4 p ( 4(n-2) p + 1 - p ) \\
&\kern8em  + 36(1-2p)^2( 
 (n-2)p^2  + \frac{1}{3} (4 - 5 p + p^2) ) \bbbklrr \\
&\kern6em
{}+ p^2  {n \choose 4} \left(  p^4(1-p^4) + 6 {n \choose 2} p^2(1 - p^6) 
\right)
\\
&\kern6em{} + (1-2p)^2  {n \choose 4} \left(  p^5(1-p^5) + 6 {n \choose 2}
p^2(1 - p^8)  \right) 
 \bbbbklgr \\
&\qquad\leq   22 + 2 n^2  ,
\ee
so that 
\be
    \sqrt{ \Var \left(  \IE^W (U'-U)^2\right) } <   5 n^{-3} + 2 n^{-4}. 
\ee

\subsection{Calculation of the  third moments}
\label{sec7}

Firstly, $
    \IE \vert T'-T \vert^3 
        =  \frac{1}{{n \choose 2}} \sum_{i < j} \IE \vert I_{i,j} - I_{i,j}'
    \vert^3 = 2p(1-p) < \frac{1}{2} , $
so that 
\be
\IE \vert T_1'-T_1 \vert^3 = \frac{(n-2)^3}{n^6} 2p(1-p) < \frac{1}{2} n^{-3}. 
\ee
Similarly, 
\bes
&\IE \vert V'-V\vert^3 \\
&\qquad= \frac{1}{{n \choose 2}} \sum_{i < j} \IE \vert I_{i,j} - I_{i,j}'
\vert^3 \sum_{k, \ell, s : k, \ell, s  \ne i,j} 
(I_{j,k} + I_{i,k} ) (I_{j,\ell}  +I_{i,\ell} ) (I_{j,s} + I_{i,s} ) \\
&\qquad=  2p(1-p) (n-2)\times\\
&\kern4em\times\left(  8 p^2 + 2 p (1-p)   + 2(n-3) (2p^2 + 2 p^3) +
8(n-3)(n-4) p^3 \right) ,
\ee 
so that 
\be
{\IE \vert V_1'-V_1 \vert^3}
<  \frac{64}{27} \left(  n^{-3} +  n^{-4} +  n^{-5}    \right) . 
\ee 
Lastly, 
\bes
\IE \vert U'-U\vert^3 
&=  \frac{1}{{n \choose 2}} \sum_{i < j} \IE \vert I_{i,j} - I_{i,j}' \vert^3
\sum_{k : k \ne i,j} \sum_{\ell : \ell \ne i,j} 
\sum_{s : s \ne i,j} 
I_{j,k} I_{i,k} I_{j,\ell}  I_{i,\ell} I_{j,s}  I_{i,s}  \\
&=  2p(1-p) (n-2) \left( p^2 + (n-3) p^4 + (n-3)(n-4) p^6  \right) ,
\ee
so that 
\be
\IE \vert U_1'-U_1 \vert^3 
< \frac{54}{256} \left(  n^{-3} +  n^{-4} +  n^{-5}    \right) .
\ee

%%%%%%%%%%%%%%%%%%%%%%%%%%%%%%%%%%%%%%%%%%%%%%%%%%%%%%%%%%%%%%%%%%%%%%%%%%%%%%%
%%%%%%%%%%%%%%%%%%%%%%%%%%%%%%%%%%%%%%%%%%%%%%%%%%%%%%%%%%%%%%%%%%%%%%%%%%%%%%%

%%%%%%%%%%%%%%%%%%%%%%%%%%%%%%%%%%%%%%%%%%%%%%%%%%%%%%%%%%%%%%%%%%%%%%%%%%%%%%%
%%%%%%%%%%%%%%%%%%%%%%%%%%%%%%%%%%%%%%%%%%%%%%%%%%%%%%%%%%%%%%%%%%%%%%%%%%%%%%%

% \bibliography{literatur}
% \bibliographystyle{apt}

\end{document}